\definecolor{Gray}{gray}{0.9}
\newtheorem{theorem}{Theorem}
\newtheorem{lemma}{Lemma}
\newtheorem{remark}{Remark}
\newtheorem{example}{Example}
\newtheorem*{proof*}{Proof}
\begin{document}

\begin{frontmatter}

\title{Fighting Pickpocketing using  a Choice-based \\ Resource Allocation Model}

\author[tue]{Loe Schlicher\corref{cor1}}
\ead{}
\address[tue]{Eindhoven University of Technology, Department of Industrial Engineering and Innovation Sciences \\ P.O. Box 513, 5600 MB, Eindhoven, The Netherlands}

\author[hec]{Virginie Lurkin} 
\ead{}
\cortext[cor1]{Corresponding author}
\address[hec]{University of Lausanne, HEC Lausanne Faculty of Business and Economics \\ Quartier de Chamberonne CH-1015, Lausanne, Switzerland}


\begin{abstract}
Inspired by European actions to fight organised crimes, we develop a choice-based resource allocation model that can help policy makers to reduce the number of pickpocket attempts. In this model, the policy maker needs to allocate a limited budget over local and central protective resources as well as over potential pickpocket locations, while keeping in mind the thieves' preferences towards potential pickpocket locations.  We prove that the optimal budget allocation is proportional in ($i$) the thieves' sensitivity towards protective resources and ($ii$) the initial attractiveness of the potential pickpocket locations. By means of a numerical experiment, we illustrate how this optimal budget allocation performs against various others budget allocations, proposed by policy makers from the field. \end{abstract}

\begin{keyword}
Resource allocation, Crime prevention, Choice models, Convex optimization. \end{keyword}

\end{frontmatter}

\section{Introduction}

According to the European Union (EU) Agency for Law Enforcement, the threat posed by organised crime in Europe has never been higher. In the latest Serious and Organised Crime Threat Assessment report, it is estimated that the annual impact of organised crime on the EU economy falls between \euro 218 billion and \euro 282 billion in terms of gross domestic product (\citet{luyten2020understanding}). Another example of the financial impact of organized crimes comes from the Norwegian police that, in 2018, estimated the combined value of stolen goods to be around \euro 2.3 million, only for the city center of Oslo (\citet{Policenorway}). In order to fight these crimes, a large number of European and national actions have been undertaken over the last decade (\citet{EUCPN2}). A striking collaborative initative is the European Harmony project (\citet{Frans}), led by Belgium with Europol and including the Netherlands and the United Kingdom working as partners, whose aim was to produce a European policy cycle regarding action against organised and serious transnational crime.

A well-known and highly visible form of organised crime is pickpocketing. It refers to the unnoticed theft of items that are carried on the body of the victim (\citet{EUCPN2}). European city centers often present a multitude of criminal opportunities for (pickpocket) thieves\footnote{In this paper, we will use the term "thief/thieves" to refer to someone who has plans to  pickpocket.}, mainly due to the high concentration of tourists exploring crowded European cities every year (\citet{SOCTA2021}). In the period 2009-2015, more than 52 million attempts of pickpocketing have been reported in Europe (\citet{EUCPN}). Next to the feeling of insecurity, pickpocketing leads to significant costs on an individual level (e.g., victims need to replace stolen items) and on a national level (e.g., costs by public authorities for detection, prevention and prosecution). To reduce pickpocketing odds, awareness campaigns are typically conducted. An example of a national campaign is Boefproof in the Netherlands that encourages citizens to activate or install an anti-feature on a mobile device, allowing to make it non-accessible by remote control, and thus making it worthless for thieves in case it gets stolen. Boefproof is part of a larger campaign of the Dutch Ministry of Security and Justice, which aims at further enhancing the awareness of citizens about pickpocketing through the use of billboards, radio commercials or social media posts (e.g., via Facebook or Twitter), even using influencers  (\citet{ccv}). Awareness campaigns are also adopted in other European countries, in the form of radio commercials in Prague (Czech Republic), short videos broadcasted in hotel rooms in Brussels (Belgium) and warning templates sprayed on the pedestrian zone in Berlin (Germany), to only name a few (see Figure \ref{pickpocket}). 

\begin{figure}[h!]
    \centering
\includegraphics[scale=0.5]{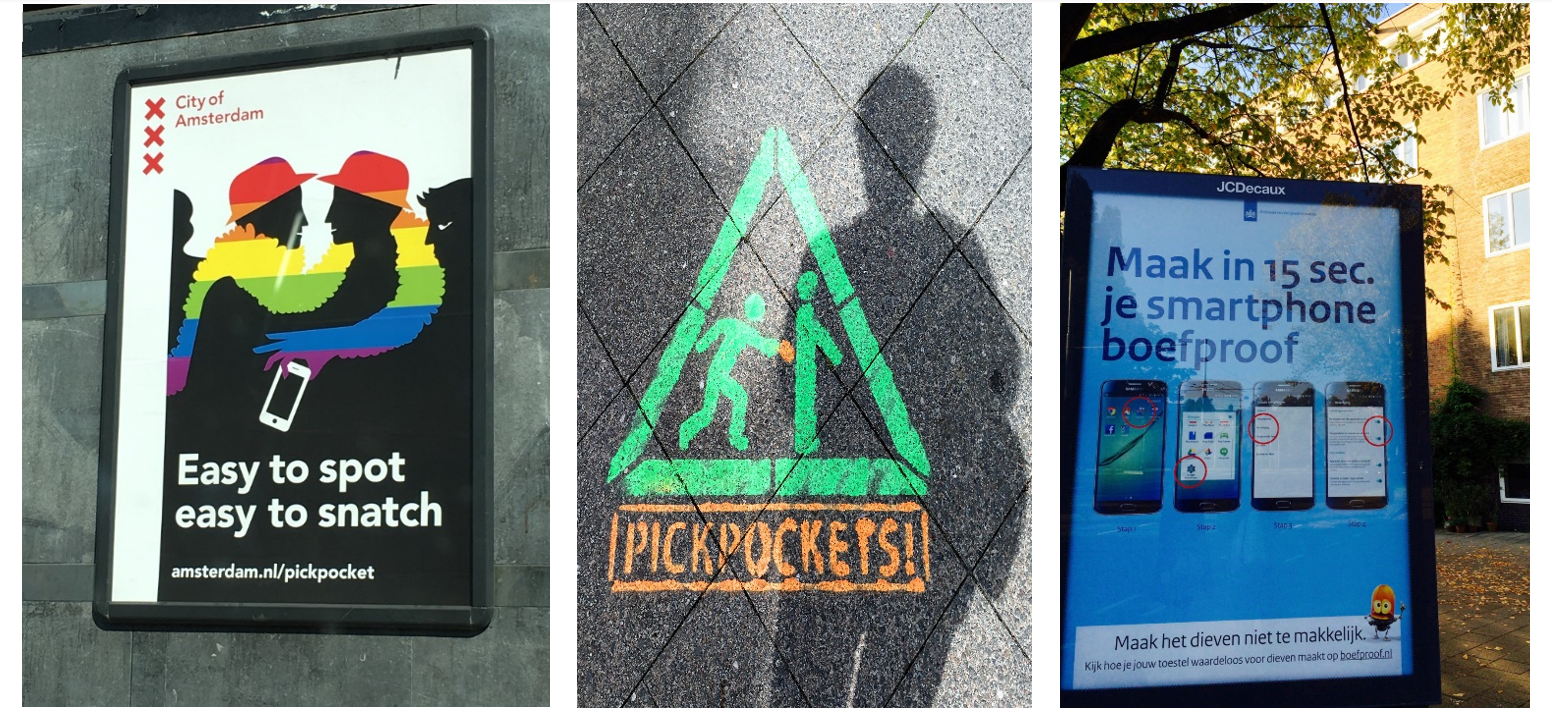}
    \caption{Examples of how to warn for pickpocketing (via billboards and sprayed templates).}
    \label{pickpocket}
\end{figure}

Next to these campaigns, governments also make use of more traditional security devices to fight pickpocketing. Common examples include the use of cameras surveillance systems and police patrolling in crowded areas such as metro stations and shopping halls. 

Any of these actions is carried out for the purpose of protecting European citizens by decreasing the number of pickpocket attempts. Naturally, these actions also require financial resources, and a delicate question for policy makers is therefore how to adequately allocate a finite budget among different protective resources (e.g., awareness campaigns and police patrolling) and potential pickpocket locations, in order to minimize the number of pickpocket attempts. Pickpocket locations are context dependent, but they typically include popular and crowded areas such as metro stations, market squares and sightseeing locations.

Inspired by this governmental budget allocation decision, we study a setting in which a police maker needs to allocate a financial budget over a number of protective resources and potential pickpocket locations, in order to minimize, per unit of time, the probability of pickpocketing over all potential pickpocket locations. Based on the actions developed within Europe, we consider that the protective resources can be categorized into two main categories: (i) the central ones that protect all the potential pickpocket locations (e.g., awareness campaigns via social media or the Boefproof app), and (ii) the local ones that protect a specific potential pickpocket location (e.g., assigning security cameras, billboards or police officers at a popular metro station). An important novelty in our model is the inclusion of the thieves' preferences towards potential pickpocket locations. We do so by using the multinomial logit model (MNL), a well-established method to describe the choice behavior of individuals in various domains (\citet{feng2022consumer}), including pickpocket location choice (see e.g., \citet{kuralarasan2022location}). Integrating a pickpocket location choice model in our budget allocation problem allows us to better capture the causal relationship between the budget allocated and the probability of pickpocketing at a specific location. Under the choice model, the thieves' preferences towards a specific location are characterized by a set of utility functions whose variables include the protective resources. The intuitive idea is that the more a potential pickpocket location is protected, the least it is attractive from the thieves' perspective (\citet{villegas2022spatial}). Regarding utility specification, we assume a log-linear utility function. Such a function ensures diminishing marginal utility, which is in line with our behavioral expectations that as the budget allocated increases, any additional dollar is contributing less to the utility. Moreover, non-linear specifications are often preferred to linear ones as they usually outperform them with regard to goodness-of-fit (\citet{sennhauser2010linear}). Since we integrate a pickpocket location \emph{choice} model into our budget allocation setting, we refer to our model as the \emph{choice-based resource allocation model}.

In this paper, we are interested in how the policy maker should optimally allocate the budget over the protective resources and potential pickpocket locations. Identifying such an optimal budget allocation is, however, complicated since our underlying optimization problem is non-convex. We therefore introduce a closely related convex optimization problem and prove that it has the same optimal solution as our original optimization problem. Then, by using results from convex optimization theory, we identify the optimal solution for this convex optimization problem and so also for our original optimization problem. Based on this result, we can conclude that the policy maker should first allocate the full budget over the central and local protective resources, proportionally to the sensitivity of the thieves' utility towards the protective resources. Subsequently, the policy maker should allocate the budget per local protective resource over the locations in a probabilistic way. The latter one can be described by a basic logit model, which is based on the initial attractiveness of the locations as specified by the thieves' utility. Using touristic attractions in Paris, we conclude with numerical experiments to show how our optimal budget allocation performs against various others budget allocations, proposed by policy makers from the field. From these experiments, we learn that optimally allocating the budget is especially important for heterogeneous locations, and that the amount of budget needed to keep the total pickpocket probability constant increases exponentially with the attractiveness of all locations.

The rest of the paper is organized as follows. Section \ref{LR} provides an overview of the main advancements in the research disciplines related to this paper: resource allocation for security and prevention, crime location decision-making, and choice-based optimization. In Section \ref{model}, we present the two decision-making processes involved in our study: the allocation of protective resources by a policy maker and the pickpocket location choice made by thieves. A first analysis of our choice-based resource allocation model is performed in Section \ref{newproblem} while optimal allocations are derived in Section \ref{sol}. Numerical experiments are presented in Section \ref{results} and we conclude this paper with final remarks in Section \ref{conclu}. Complete proofs of lemmas and theorems are given in the main text as they constitute our main contributions.

\section{Literature review} \label{LR}

This work considers a budget allocation decision over a number of protective resources and potential pickpocket locations. As such, we contribute to the literature on resource allocation for security and prevention. We further discuss this in Section \ref{literature1}. Moreover, in Section \ref{literature2}, we review some literature on crime location decision-making and choice-based optimization, two fields connected to our choice-based resource allocation model.

\subsection{Resource allocation for security and prevention} \label{literature1}

In the last four decades, significant efforts have been devoted in the development and application of resource allocation models to assist governmental agencies, such as police or homeland security departments, in making security and prevention decisions.

Pioneer work for police departments is done by \citet{kolesar1975queuing} and \citet{chaiken1978patrol}. In the late 70's, the authors were asked by the New York Police Department (NYPD) to come up with patrol car schedules that improve the fit between patrol car availability and demands for service. The authors used queuing theory and integer linear programming to come up with such schedules. Extensions and variations of these models can be found in \citet{green1984feasibility}, \citet{green1984multiple}, \citet{schaack1989n}, and \citet{kolesar1998insights}. It is worth noting that the patrol car allocation model introduced by \citet{chaiken1978patrol} is currently being used by more than 40 police departments in the US (\citet{green2004anniversary}). Other important decisions that are investigated for police departments are the optimal dispatching of police cars (see, e.g., \citet{dunnett2019optimising}), the joint decision of dispatching and locating police cars (see, e.g., \citet{adler2014location}), and the optimal partition of a city centre into patrol sectors (see e.g., \citet{camacho2015multi} and \citet{d2002simulated}). Interesting is to see that several of these quantitative studies were done in collaboration with police forces. \citet{curtin2010determining}, for example, investigated how the police of Dallas TX should optimally partition its city centre in patrol sectors and \citet{camacho2015decision} designed a method to identify patrolling areas in close collaboration with the Spanish National Police Corps.

Work for homeland security departments expanded significantly after the 9/11 attacks. A central question in many of these works is how a limited budget should be allocated over a number of potential attack locations. One could for instance think of the protection of some important locations and buildings, such as shopping districts, metro stations, and universities. In order to capture the strategic behavior of both a homeland security department and an offender (e.g., a terrorist) most of these papers have a game theoretical flavor (\citet{ azaiez2007optimal, bier2007choosing, bier2008optimal, zhuang2007balancing, zhuang2010reasons, zhuang2011secrecy, hausken2008strategic, shan2013hybrid, guan2017modeling, baron2018game} and \citet{ musegaas2022stackelberg}). Typical aspects in these stackelberg/simultaneous games are the cost effectiveness of security investments, the defender's valuations of the various targets, and the degree of the defender's uncertainty about the attacker's target valuations.

On a smaller scale level, these game theoretical models also found applications into airport security (\citet{kiekintveld2009computing}). For example, various types of security resources such as police officers, canine units, or checkpoints need to be allocated over multiple targets at airports. It is interesting again to see that the model of \citet{jain2010software} is used at the LA International airport for more than 10 years now, and various others airports are benefiting from this developed model as well. Other applications for homeland security departments can be found in the allocation of federal air marshals over airplanes (\citet{tambe2014computational}), the division of screening capacity over incoming shipments at ports (\citet{bier2011analytical, bakir2011stackelberg}), the deployment of coast guards over harbors (\citet{shieh2012protect}) and the allocation of rangers in wildlife national parks to fight poaching (\citet{yang2014adaptive}).

Clearly, our work can be recognized as another application in the field of resource allocation for security and protection. In line with most recent papers on security, our problem captures the behaviors of both a policy maker and a thief in a resource allocation setting. We do so by using a model from discrete choice theory, namely the multinomial logit (MNL) model, which represents the pickpocket location \textit{choice} process of a thief. For that reason, we shortly introduce the field of crime location decision-making, as well as the main advancements in choice-based optimization in the following sections.

\subsection{Crime location decision-making and choice-based optimization} 
\label{literature2}

The question why offenders commit a crime at a specific location and not somewhere else, is a  classic one in criminology. There exist various approaches to address this question, including the offender-based approach, target-based approach, mobility approach and the discrete choice approach (\citet{bernasco2005residential}). Our work follows this last one, which can be considered as the most recent advancement in the field. This approach is used for the first time in \citet{bernasco2005residential}. In their paper a discrete choice model, namely the conditional logit model, is used to analyze residential burglar's target area choice in the city centre of The Hague. Their work was the starting point for many others to use the discrete choice approach as a method to predict the decision making process of offenders in various forms of crime such as burglary, street robbery and pickpocketing (see, e.g., \citet{weisburd2009units, ruiter2017crime, curtis2021importance} and \cite{kuralarasan2022location}). Typically, in these papers, the authors try to identify some factors that might effect the decision of an offender to commit crime and test its significance by using data from the field. In our paper, we also consider such factors, namely local and central protective measures such as awareness campaigns via social media and police patrol.


In a broader, more theoretical, context, our resource allocation problem falls within the scope of \textit{choice-based optimization} (CBO), a growing body of literature in the operations research community (\cite{paneque2021integrating,paneque2022lagrangian,ROEMER2022}). \color{black} CBO allows to model the interplay between multiple actors, such as the government and thieves, whose decisions are important in the optimization problem under consideration. Moreover, CBO problems are probabilistic in nature, which make them attractive to capture the \textit{uncertain}  behavior of actors (e.g., the pickpocket location choice of a thief in our work). 

Several optimization problems have been investigated within a CBO framework. The most studied ones certainly come from the revenue management (RM) community and aim at maximizing revenues (and sometimes customer satisfaction) by deciding about product assortment (see e.g., the recent work of \cite{rusmevichientong2014assortment,nip2021assortment}) and/or pricing (see e.g., \cite{dong2019pricing,li2022pricing} for recent contributions). Various studies have included choice models in facility location problems, including school location (see e.g., \cite{castillo2015school}), health care facility location (see \cite{haase2015insights}), retail facility location (see e.g., \cite{muller2014customer}), and Park and Ride facility location (see e.g., \cite{aros2013p}). Other contributions are also present in literature on traffic assignment (see e.g., \cite{qian2013hybrid}), routing (see e.g., \cite{yang2017approximate}), and supply chain (see e.g., \cite{ROEMER2022}). Clearly, our paper is thus also enrolled in this line of existing literature on CBO, but for a new optimization problem and domain: a resource allocation problem for security and prevention.

\section{The Choice-based resource allocation model}
\label{model}
This section explains the two decision-making processes involved in our study: the allocation of budget amongst various local and central protective resources and potential pickpocket locations by a policy maker and the pickpocket location choice made by a thief\footnote{The thief could also represent an organized group of people planning to pickpocket.}.

\subsection{The policy maker's perspective}
\label{subsect:policymaker}
The optimization problem considered in this study is the one of a policy maker (e.g., mayor of a city) who needs to protect a set $N \subseteq \mathbb{N}$ of potential pickpocket locations. For example, the policy maker of Paris wants to protect its touristic attractions, such as the Eiffel tower, the Louvre Museum, the Sacré-Coeur and the Quartier Montmartre, from pickpocketing. In order to do so, a maximal budget of $R \in \mathbb{R}_{++}$ can be spent on several protective resources and potential pickpocket locations. Inspired by the innovative protection measures developed in the European Union in the last years, we divide the protective resources into two main categories: (i) the set of local protective resources $L \subseteq \mathbb{N}$ and the set of central protective resources $ C \subseteq \mathbb{N}$. For example, the policy maker of Paris may consider an awareness campaign via social media (a central protective resource) and assigning billboards and police officers to potential pickpocket locations (two local protective resources). For notational convenience, we assume $N \cap (C \cup L) = \emptyset$ and $L \cap C = \emptyset$ \textcolor{black}{and to avoid trivial cases, $ C \cup L \not = \emptyset$}. Moreover, we denote by $x_{ij} \in \mathbb{R}_{++}$ the budget allocated to local protective resource of type $j \in L$ to potential pickpocket location $i \in N$, and by $x_j \in \mathbb{R}_{++}$ the budget allocated to central protective resource $j \in C$. In compact form, we have $x = (x_{ij})_{i \in N, j \in L}, (x_j)_{j \in C})$, which we refer to as a \emph{budget allocation}. The set of feasible budget allocations is given by $$\mathscr{X} = \left\{ x \in \mathbb{R}^{{\vert N \vert \cdot \vert L \vert + \vert C \vert} }_{++} \bigg\vert \sum_{i \in N}\sum_{j \in L} x_{ij} + \sum_{j \in C} x_j \leq R\right\}.$$

Please, note that ($i$) the policy maker is allowed to spend its budget partially ($ii$) each type of protective resource gets some budget allocated, which can be arbitrarily small. The policy maker's objective is to find a budget allocation $x \in \mathscr{X}$ that minimizes the overall pickpocket probability. This overall pickpocket probability is described by a function $\mathbb{P}: \mathscr{X} \to [0,1]$ and represents the chance of pickpocketing per unit of time over all potential pickpocket locations together. In our example, it could for instance represent the chance of pickpocketing at the Eiffel tower, the Louvre Museum, the Sacré-Coeur and Quatier Montmarte together, per hour. Formally, the policy maker wants to solve problem $\mathscr{P}$, which is given by
\begin{equation}\mathscr{P} = \min_{x \in \mathscr{X}} \mathbb{P}(x).
\label{problemdefender}
\end{equation}

In the upcoming section, we  discuss the characteristics of the overall pickpocket probability in more detail. In particular, we describe how it captures the thief's choice towards potential pickpocket location, which is based on the protective resources allocated.

\subsection{The thief's perspective}

We assume that the thief chooses a location according to the multinomial logit model (MNL), the most widely used choice models in practice (\citet{bierlaire2017introduction}). Logit models assume that a decision maker chooses what he or she prefers from a finite, non-empty, set of alternatives, denoted by $\mathscr{C} \not = \emptyset$. In our setting, the decision maker is the thief who needs to choose from the set of potential pickpocket locations (i.e., $N$) and the option to not choose any of them, which we denote by 0.\footnote{As mentioned by \cite{campbell2019including}, including such opt-out option is widespread practice and recommended in choice models.} This option may, for instance, represent the choice to pickpocket in another city/district or to do nothing and stay at home. In our setting, the choice set is thus $\mathscr{C} = N \cup \{0\}$. A fundamental assumption behind logit models is that the decision maker is a rational utility maximizer. That means, the decision maker goes for choice $i \in \mathscr{C}$ that maximizes his or her given utility function. The exact specification of this utility function is however unknown and therefore modeled as a continuous random variable. More specifically, for any choice $i  \in \mathscr{C} \backslash \{\emptyset\}$, the utility function contains a deterministic function of the attributes of the alternatives, and a continuous error term, capturing the specification and measurement errors, and reads

\begin{equation*}\label{Eq:utility0}
U_{i}(x) = V_{i}(x)+ \varepsilon_{i} \mbox{ for all } x \in \mathscr{X}.
\end{equation*}

with $U_0(x) = 0$ for all $x \in \mathscr{X}$.\footnote{We decided to normalize this utility to zero, but this is not restrictive.} In our case, the deterministic utility function associated with choice $i \in \mathscr{C} \backslash \{0\}$ is:

\begin{align*}\label{Eq:utility}
V_{i}(x) &= \alpha_i - \sum_{j \in L}  \beta_j \ln x_{ij} - \sum_{j \in C}\beta_j \ln x_j \mbox{ for all } x \in \mathscr{X}, 
\end{align*}

with $\alpha_i \in \mathbb{R}_{+}$ and $\beta_j \in \mathbb{R}_{++}$. Parameter $\alpha_i$ is an alternative-specific constant that can be recognized as the initial interest in potential pickpocket location $i$ and captures exogenous factors, such as the number of tourists per time unit,  the distance to the thief's home, or the number of escape possibilities. Parameter $\beta_j$ characterizes the preferences of the thief regarding protective resources $j \in C \cup L$. It can thus be interpreted as the sensitivity of the thief towards the risk perceived with protective resource $j$. Furthermore, functions "$-\ln x_{ij}$" and "$-\ln x_j$", reflect the idea that the thief is aware of the protection measures in place at the different locations and take them into account when deciding which location to commit pickpocketing. In particular, the more a potential pickpocket location is protected, the least it is attractive from the thief’s perspective (see, e.g., \citet{villegas2022spatial}). This relationship is modelled in a log-linear fashion, since the marginal utility is unlikely to be constant for these protective resources (i.e., adding an extra dollar on top of an 1 million investment is not contributing the same as the first dollar invested). Finally, $\varepsilon_{i}$ is a continuous error term, capturing the specification and measurement errors. In the context of pickpocket location choice  (see, e.g., \citet{kuralarasan2022location}), it is custom to assume that all error terms are independent, identically, and extreme value distributed with location parameter $0$ and scale parameter $1$ (i.e., $\varepsilon_{i} \sim$ EV(0,1)), and we do so as well. Under this assumption, the probability that the thief decides to pickpocket at pickpocket location $i$ is
\begin{equation*}\label{Eq:logitprobability}
 \mbox{Pr}(U_{i}(x) \geq U_{j}(x), \forall j \in \mathscr{C}) = \frac{e^{ V_{i}(x) }}{\sum_{j \in \mathscr{C}} e^{ V_{j}(x)}} \mbox{ for all } i \in \mathscr{C} \mbox{ and all } x \in \mathscr{X},
\end{equation*}

which is the well-known multinomial logit expression, a widely used choice model (see e.g., \citet{frith2019modelling}). Now, by exploiting the definition of $V_i(x)$ for all $x \in \mathscr{X}$, the probability that potential pickpocket location $i \in N$ is selected by the thief, given
$x \in \mathscr{X}$, equals

\begin{equation*} \mathbb{P}_i(x) = \frac{ \mbox{exp}\left\{\alpha_i - \sum_{j \in L} \beta_j \ln(x_{ij}) - \sum_{j \in C}\beta_j \ln(x_j)\right\}}{1 + \sum_{i \in N} \mbox{exp}\left\{\alpha_i - \sum_{j \in L} \beta_j \ln(x_{ij}) - \sum_{j \in C}\beta_j \ln(x_j)\right\}}.
 \end{equation*}

Consequently, the overall pickpocket probability can be obtained by summing the above pickpocket probabilities over all potential pickpocket locations. Formally, for a given $x \in \mathscr{X}$, this probability $\mathbb{P}(x)$ equals
\begin{equation} \begin{aligned} \label{equation:probability} \mathbb{P}(x) &= \sum_{i \in N} \mathbb{P}_i(x) = \frac{\sum_{i \in N} \mbox{exp}\left\{\alpha_i - \sum_{j \in L} \beta_j \ln(x_{ij}) - \sum_{j \in C}\beta_j \ln(x_j)\right\}}{1 + \sum_{i \in N} \mbox{exp}\left\{\alpha_i - \sum_{j \in L} \beta_j \ln(x_{ij}) - \sum_{j \in C}\beta_j \ln(x_j)\right\}}.
\end{aligned} \end{equation}

By exploiting the relationship of (\ref{equation:probability}), we can now describe the policy maker optimization problem in full, namely

\begin{equation} \label{problem:P}\mathscr{P} = \min_{x \in \mathscr{X}} \frac{\sum_{i \in N} \mbox{exp}\left\{\alpha_i - \sum_{j \in L} \beta_j \ln(x_{ij}) - \sum_{j \in C}\beta_j \ln(x_j)\right\}}{1 + \sum_{i \in N} \mbox{exp}\left\{\alpha_i - \sum_{j \in L} \beta_j \ln(x_{ij}) - \sum_{j \in C}\beta_j \ln(x_j)\right\}} \end{equation}

In the next section, we study $\mathscr{P}$ and some of its properties in more detail. 

\section{An analysis of optimization problem $\mathscr{P}$}
\label{newproblem}

As a first step, we slightly reformulate the overall pickpocket probability $\mathbb{P}$ in a form that is more suitable for our exact analysis later on.
\begin{lemma} \label{lemma:rewritingprob} For every budget allocation $x \in \mathscr{X}$, the overall pickpocket probability equals
$$ \mathbb{P}(x) = \left(\sum_{i \in N}   \frac{\emph{exp}\left\{a_i\right\}}{\prod\limits_{j \in L}x_{ij}^{\beta_j} \prod\limits_{j \in C} x_j^{\beta_j}} \right)\Bigg / \left(1+ \sum_{i \in N} \frac{\emph{exp}\left\{a_i\right\}}{\prod\limits_{j \in L}x_{ij}^{\beta_j} \prod\limits_{j \in C} x_j^{\beta_j}}\right). $$
\end{lemma}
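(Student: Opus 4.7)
The statement is essentially an algebraic rewriting of equation~(\ref{equation:probability}), so the plan is to apply the standard identities of the exponential and logarithmic functions termwise to each summand appearing in the numerator and the denominator. The only thing that needs to be verified carefully before starting is that each $\ln$ is well-defined: since $x \in \mathscr{X}$ forces $x_{ij}, x_j \in \mathbb{R}_{++}$ for every $i \in N$, $j \in L$, $j \in C$, all logarithms in (\ref{equation:probability}) are defined and the identity $\exp(-\beta_j \ln y) = y^{-\beta_j}$ holds for $y = x_{ij}$ and $y = x_j$.

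First, I would fix an arbitrary $i \in N$ and rewrite its corresponding exponential term. Using $\exp(a+b) = \exp(a)\exp(b)$, I split
\[
\exp\!\left\{\alpha_i - \sum_{j \in L} \beta_j \ln(x_{ij}) - \sum_{j \in C}\beta_j \ln(x_j)\right\}
= \exp(\alpha_i) \cdot \prod_{j \in L} \exp(-\beta_j \ln x_{ij}) \cdot \prod_{j \in C} \exp(-\beta_j \ln x_j).
\]
Applying $\exp(-\beta_j \ln y) = y^{-\beta_j}$ to each factor in the two products and collecting the negative exponents into a single denominator yields
\[
\frac{\exp(\alpha_i)}{\prod_{j \in L} x_{ij}^{\beta_j} \prod_{j \in C} x_j^{\beta_j}},
\]
which is exactly the form of the summand appearing in Lemma~\ref{lemma:rewritingprob}.

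Finally, I would substitute this identity back into the numerator and denominator of (\ref{equation:probability}) simultaneously (the $1$ in the denominator is unaffected) to obtain the desired expression. There is no genuine obstacle here: the proof is a one-line symbolic simplification and the only subtlety worth flagging in the write-up is the positivity of the budget components that legitimises the use of $\ln$ and of the identity $\exp(-\beta_j \ln y) = y^{-\beta_j}$.
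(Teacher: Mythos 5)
Your proposal is correct and follows essentially the same route as the paper's own proof: a termwise application of the standard exponential/logarithm identities to each summand, with the positivity of the budget components ($x \in \mathbb{R}_{++}^{\vert N\vert\cdot\vert L\vert+\vert C\vert}$) invoked to justify their use. The only cosmetic difference is that you split the exponential into a product first and convert each factor, whereas the paper collects the logarithmic terms into a single $\ln$ before exponentiating; the substance is identical.
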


\begin{proof}
Let $x \in \mathscr{X}$ and $i \in N$. Then
\begin{equation*} \mbox{exp} \left\{\alpha_i - \sum_{j \in L} \beta_j \ln x_{ij} -\sum_{j \in C} \beta_j \ln x_j \right\} = \mbox{exp} \left\{\alpha_i +  \ln \left( \frac{1}{\prod\limits_{j \in C} x_j^{\beta_j} \prod\limits_{j \in L}x_{ij}^{\beta_j}}\right) \right\}  = \frac{\mbox{exp}(\alpha_i)}{\prod\limits_{j \in C} x_j^{\beta_j} \prod\limits_{j \in L}x_{ij}^{\beta_j}}. \label{equation:reformulattingB(x)}\end{equation*}
In the first equality, we use the properties $b \ln{a} = \ln(a^b)$ for any $a \in \mathbb{R}_{++}$, $b \in \mathbb{R}$ and $\ln{a} + \ln{b} = \ln{ab}$ for any $a,b \in \mathbb{R}_{++}$, and in the second equality, we use the property $\mbox{exp}\left\{\ln(y)\right\} = y$ for all $y \in \mathbb{R}_+$. Please, note that we can use these properties since we have $x \in \mathbb{R}^{\vert N \vert \cdot  \vert L \vert + \vert C  \vert }_{++}$. By applying the above result to (\ref{problem:P}) for any $i \in N$, we obtain the result.\end{proof}

We now illustrate how Lemma \ref{lemma:rewritingprob} can be applied in a  fictitious example. Note, parameters are not selected to represent reality, but to keep calculations simple and easy to follow.

\begin{example}
Suppose the policy maker of Paris wants to protect the Louvre Museum and the Eiffel tower against pickpocketing. If we label the Louvre Museum by "1" and the Eiffel tower by "2", then $N = \{1,2\}$. A schematic representation of the situation is given in Figure \ref{fig:2cities}.
\begin{figure}[h!]
    \centering
\includegraphics[scale=0.2]{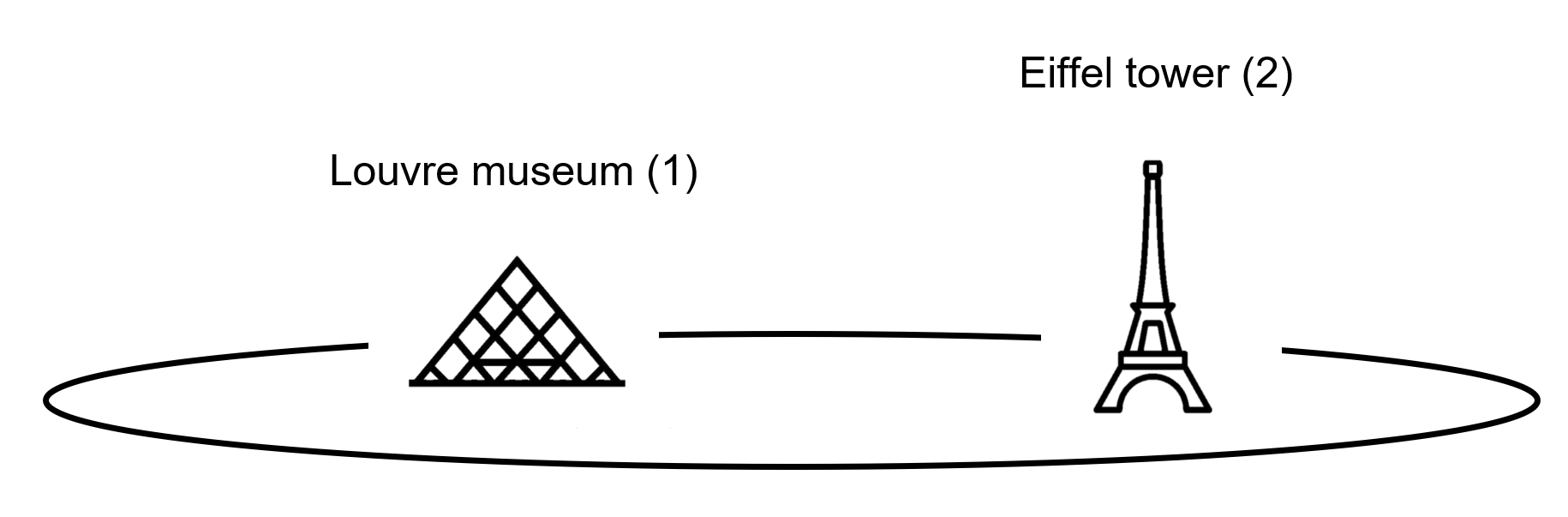}
    \caption{Schematic representation of Paris}
    \label{fig:2cities}
\end{figure}

The policy maker of Paris has a budget $R = 30$ to allocate over one central resource (digital campaign), denoted by $C = \{3\}$, and two local resources (cameras and billboards), denoted by $L = \{4,5\}$. Based on the number of pickpocket attempts, estimated per hour, and given the protective measures taken in previous years, the policy maker estimates the following utility parameters: $\alpha_1 = 6 \ln(3)$ and $\alpha_2 = 6 \ln(2)$, $\beta_3 = 1, \beta_4 = 3$ and $\beta_5 = 2$. The policy maker decides to split $R$ as follows: $x_3 = 15$, $x_{1,4} = 3$, $x_{2,4} =2$,  $x_{1,5} = 6$, and $x_{2,5} = 4$. Then, the overall pickpocket probability per hour equals
$$\mathbb{P}(x) = \left( \frac{729}{3^3 \cdot 6^2 \cdot 15^1} + \frac{64}{2^3 \cdot 4^2 \cdot 15^1}\right) \bigg / \left( \frac{729}{3^3 \cdot 6^2 \cdot 15^1} + \frac{64}{2^3 \cdot 4^2 \cdot 15^1} + 1 \right)  = \frac{1}{13},$$

Please, observe that $x$ is not an optimal budget allocation. For instance, for budget allocation $x' \in \mathscr{X}$ with $x'_3 =  10$, $x_{1,4}' = 3$, $x_{2,4}' = 2$, $x_{1,5}' = 9$, and $x_{2,5}' = 6$ we obtain $\mathscr{P}(x') = \frac{1}{19}$. Since $\mathbb{P}(x') = \frac{1}{19} < \frac{1}{13} = \mathbb{P}(x)$, budget allocation $x$ cannot be optimal. \label{example:example1}
\end{example}

In Example \ref{example:example1}, we identified two budget allocations which fully used the budget available. It is, however, not clear whether such type of budget allocation is optimal. As an example, consider a policy maker who spends its budget partially. Then, allocating the remaining budget to a specific potential pickpocket location lowers the attractiveness of that location, but also increases the \emph{relative} interest towards other locations. Ultimately, it is unclear how these two aspects balance out against each other, and so whether using the full budget available is optimal. We now discuss an example to get more grip on this.

\begin{example} \label{example:example2}
Consider a city centre with $N = \{1,2\}$, $R = 3$, $L = \{3\}$, $C = \emptyset$, $\alpha_1 = \alpha_2 = 0$, and $\beta_{1,3} = \beta_{2,3} = 4$. In Table \ref{tab:table2}, we represent the probabilities that the thief opts for location 1, location 2 or decides to not pickpocket, for two budget allocations. 
\begin{table}[h!]
    \centering
    \begin{tabular}{c|c|c|c} \hline
         $x = (x_{1,3},x_{2,3})$&  $\mathbb{P}_1(x)$ & $\mathbb{P}_2(x)$ & $1 - \mathbb{P}(x)$  \\ \hline \\
        $(1,1) $ & $\frac{1}{3}$ & $\frac{1}{3}$ & $\frac{1}{3}$ \\ \\ 
         $(2,1)$ & $\frac{1}{33}$ & $\frac{16}{33}$ & $\frac{16}{33}$ \\ \\
\hline
    \end{tabular}
    \caption{Pickpocket probability per location and probability of no crime for two budget allocations.}
    \label{tab:table2}
\end{table}

From Table $\ref{tab:table2}$, we learn that allocating one more unit of budget towards location 1 results in a drop in the pickpocket probability in location 1 of $\frac{10}{33}$. At the same time, the pickpocket probability in location 2 increases with $\frac{5}{33}$. This increase, however, is only half of the decrease realized in location 1. The other half ends up as an increased probability of no pickpocketing. So, in this example, allocating more budget decreases the overall pickpocket probability.  
\end{example}

We now show that allocating more budget always leads to a decrease in the overall pickpocket probability, and consequently, that it is optimal to spend the full budget.

\textcolor{black}{\begin{lemma} \label{lemma:fullR}
Every optimal budget allocation $x \in \mathscr{X}$ satisfies $\sum_{i \in N} \sum_{j \in L} x_{ij} + \sum_{j \in C} x_j = R$. That means, it is optimal to use the full budget $R$.
\end{lemma}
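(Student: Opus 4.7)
The plan is to reduce the problem to a monotonicity argument via the rewriting in Lemma \ref{lemma:rewritingprob}. Writing $S(x) = \sum_{i \in N} \exp(\alpha_i) \bigl/ \bigl(\prod_{j \in L} x_{ij}^{\beta_j} \prod_{j \in C} x_j^{\beta_j}\bigr)$, Lemma \ref{lemma:rewritingprob} gives $\mathbb{P}(x) = S(x)/(1+S(x))$. Since the scalar map $s \mapsto s/(1+s)$ is strictly increasing on $[0,\infty)$, minimizing $\mathbb{P}$ over $\mathscr{X}$ is equivalent to minimizing $S$ over $\mathscr{X}$. This reformulation is the main conceptual step; after that, the argument is just monotonicity in each coordinate.

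Next I would observe that, because $\beta_j > 0$ for every $j \in C \cup L$, the function $S$ is strictly decreasing in each coordinate $x_{ij}$ and $x_j$: increasing any single budget variable strictly enlarges the corresponding denominator $\prod_{j \in L} x_{ij}^{\beta_j} \prod_{j \in C} x_j^{\beta_j}$ in every term indexed by $i \in N$, hence strictly shrinks $S(x)$.

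The proof then proceeds by contradiction. Suppose $x^{\star} \in \mathscr{X}$ is optimal but $\sum_{i \in N}\sum_{j \in L} x^{\star}_{ij} + \sum_{j \in C} x^{\star}_j < R$, and let $\delta > 0$ denote the slack. Using the standing assumption $C \cup L \neq \emptyset$ (together with $N \neq \emptyset$, as is implicit in the model), pick one coordinate of $x^{\star}$ — say some $x^{\star}_{ij}$ if $L \neq \emptyset$, otherwise some $x^{\star}_j$ with $j \in C$ — and define $x'$ by adding $\delta$ to that single coordinate while leaving all others unchanged. Then $x' \in \mathbb{R}_{++}^{|N|\cdot|L|+|C|}$ (strict positivity is preserved since we only increased a positive entry) and $x'$ satisfies the budget constraint with equality, so $x' \in \mathscr{X}$. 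By the strict monotonicity noted above, $S(x') < S(x^{\star})$, and hence $\mathbb{P}(x') < \mathbb{P}(x^{\star})$, contradicting the optimality of $x^{\star}$.

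I do not expect any serious obstacle here: the only subtlety is to make sure the perturbed allocation $x'$ stays inside the strictly positive orthant that defines $\mathscr{X}$, which is immediate because we only increase, never decrease, an already positive coordinate. The assumption $C \cup L \neq \emptyset$ guarantees that such a coordinate to bump actually exists.
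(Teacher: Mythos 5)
Your proof is correct and follows essentially the same route as the paper: both exploit the strict monotonicity of $s \mapsto s/(1+s)$ and then absorb the budget slack into a single coordinate, which strictly decreases $S$ because every $\beta_j > 0$. One small imprecision: increasing a \emph{local} variable $x_{ij}$ enlarges the denominator only of the term for that particular location $i$ (not of every term indexed by $i \in N$, as it would for a central variable), but since the remaining terms are unchanged $S$ still strictly decreases, so your conclusion stands --- this is exactly the central-versus-local case split that the paper's proof makes explicit.
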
}

\begin{proof}

First, we show that for any $A_1,A_2 \in \mathbb{R}_{++}$ with $A_1 < A_2$ and any $B \in \mathbb{R}_{+}$ we have

\begin{equation} \frac{A_1 + B}{A_1 + B + 1} < \frac{A_2 + B}{A_2 + B + 1}. \label{intermediateresult} \end{equation}

Let $A_1,A_2 \in \mathbb{R}_{++}$ with $A_1 < A_2$ and $B \in \mathbb{R}_{+}$. Then,
\begin{equation*} \begin{aligned} 0 > &\frac{A_1 - A_2}{(A_1 + B + 1)(A_2 + B + 1)} \\
&= \frac{(A_1 + B) (A_2 + B  + 1)}{(A_1 + B + 1)(A_2 + B + 1)} - \frac{(A_1 + B + 1) (A_2 + B)}{(A_1 + B + 1)(A_2 + B + 1)}\\
&= \frac{A_1 + B}{A_1 + B + 1} - \frac{A_2 + B}{A_2 + B + 1},
\end{aligned} \end{equation*}
where the inequality holds since $(A_1 + B + 1)(A_2 + B + 1) > 0$ and $A_1-A_2<0$.

Now, let $x \in \mathscr{X}$ be a budget allocation such that $\sum_{i \in N} \sum_{j \in L} x_{ij} + \sum_{j \in C} x_j = R + \varepsilon$ for some $\varepsilon > 0$. We will show that $x$ is not an optimal budget allocation by case condition. 

\begin{itemize}
    \item{
\emph{Case 1. $C \not = \emptyset$}

\medskip 

Assume, without loss of generality, that $1 \in C$. Moreover, let  $\overline{x}$ with $\overline{x}_{ij} = x_{ij}$ for all $i \in N$ and all $j \in L$, $\overline{x}_1 = x_1 + \epsilon$ and $\overline{x}_j = x_j$ for all $j \in C \backslash \{1\}$. Please, observe that $\overline{x} \in \mathscr{X}$, i.e., $\overline{x}$ is a feasible budget allocation. Now, let 
$$A_1 = \left(\sum_{i \in N}   \frac{\mbox{exp}\left\{a_i\right\}}{\prod\limits_{j \in L}\overline{x}_{ij}^{\beta_j} \prod\limits_{j \in C } \overline{x}_j^{\beta_j}} \right), A_2 = \left(\sum_{i \in N}   \frac{\mbox{exp}\left\{a_i\right\}}{\prod\limits_{j \in L}x_{ij}^{\beta_j} \prod\limits_{j \in C } x_j^{\beta_j}} \right) \mbox{ and } B = 0.$$

Note that $0 < A_1 < A_2$. Then, by (\ref{intermediateresult}) we obtain
$$ \mathbb{P}(\overline{x}) =  \frac{A_1 + B}{A_1 +  B + 1} < \frac{A_2 + B}{A_2 + B + 1} = \mathbb{P}(x).$$}
\item{\emph{Case 2. $C = \emptyset$}

\medskip

By the case condition, we have $L \not = \emptyset$. Assume, without loss of generality, that $1 \in N$ and $2 \in L$. Next, let $\overline{x}_{12} = x_{12} + \varepsilon$ and $\overline{x}_{ij} = x_{ij}$ for all $i \in N$ and all $j \in L$ with $(i,j)\not =(1,2)$. Please, observe that $\overline{x} \in \mathscr{X}$, i.e., $\overline{x}$ is a feasible budget allocation. Let 
$$A_1 =   \frac{\mbox{exp}\left\{a_1\right\}}{\prod\limits_{j \in L}\overline{x}_{1j}^{\beta_j} },  A_2 =    \frac{\mbox{exp}\left\{a_1\right\}}{\prod\limits_{j \in L}x_{1j}^{\beta_j}} \mbox{ and } B = \left(\sum_{i \in N \backslash \{1\}}   \frac{\mbox{exp}\left\{a_i\right\}}{\prod\limits_{j \in L}x_{ij}^{\beta_j}} \right).$$

Note that $0< A_1 < A_2$ and $B\geq0$. Then, by (\ref{intermediateresult}) we obtain
$$ \mathbb{P}(\overline{x}) =  \frac{A_1 + B}{A_1 +  B + 1} < \frac{A_2 + B}{A_2 + B + 1} = \mathbb{P}(x).$$}
\end{itemize}
In both cases, we have $\mathbb{P}(\overline{x}) < \mathbb{P}(x)$, which concludes this proof. \end{proof}

\begin{remark}
Although it is optimal to spend the full budget, it is not true that each budget allocation with fully spent budget dominates every budget allocation with partially spent budget. For instance, in Example \ref{example:example2} we  have $\mathbb{P}((2.5,0.5)) \approx 0.94 > \frac{2}{3} = \mathbb{P}((1,1))$. Basically, this implies that the policy maker should prefer more budget, but only if properly allocated. 
\end{remark}

We would like to finish this section by stating that our optimization problem $\mathscr{P}$ has a non-convex objective function. This may complicate the identification and characterisation of an optimal budget allocation. We illustrate the non-convexity by means of an example. 

\begin{example} \label{example:example3} Reconsider the setting of Example \ref{example:example2}. Due to the full budget result of Lemma \ref{lemma:fullR}, we know that $x_{1,3} + x_{2,3} = 3$. Consequently, we can describe the overall pickpocket probability in terms of $x_{1,3} \in (0,3)$. The result hereof is depicted in Figure \ref{figure:convexfunction1} on the next page.

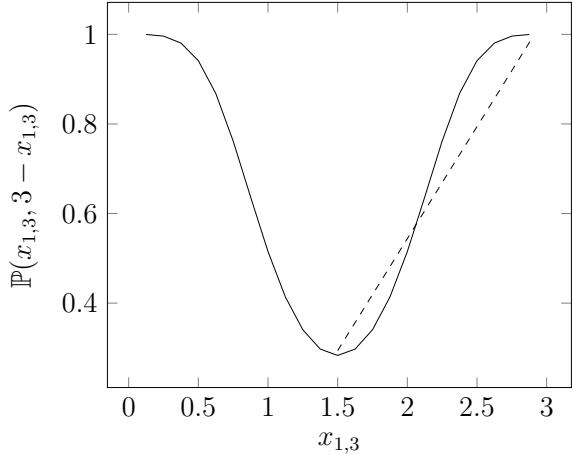
\begin{figure}[h!]
\centering
\begin{tikzpicture}[domain=0:3, , scale = 0.9]  \begin{axis}[     xlabel=$x_{1,3}$,    ylabel={$\mathbb{P}(x_{1,3}, 3 - x_{1,3})$}, legend style={at={(0.65,0.1)},anchor=west}] ]    \addplot[mark=none] {(1/x^4 + 1/(3-x)^4)/(1/x^4 + 1/(3-x)^4+ 1)};  
\addplot[domain=1.5:2.9, mark=none, dashed] {(-0.456 + 0.5*x)};
\end{axis}
\end{tikzpicture}
\caption{Example of non-convex objective function} \label{figure:convexfunction1}
\end{figure}

\newpage 
From Figure \ref{figure:convexfunction1}, we learn that the objective function is non-convex (see dashed line).

\end{example}

\section{Solving optimization problem $\mathscr{P}$}
\label{sol}

In this section, we focus on solving our optimization problem $\mathscr{P}$. That means, we are interested in finding characteristics of an optimal budget allocation. In doing so, we first introduce another, but strongly related, optimization problem, which we denote by

\begin{equation} \mathscr{B} = \min_{x \in \mathscr{X}} \mathbb{B}(x), \end{equation}

with $\mathbb{B}: \mathscr{X} \to \mathbb{R}$ be the objective function, which is defined as follows

\begin{equation} \mathbb{B}(x) = \sum_{i \in N}   \frac{\mbox{exp}\left\{a_i\right\}}{\prod\limits_{j \in C} x_j^{\beta_j} \prod\limits_{j \in L}x_{ij}^{\beta_j}} \mbox{ for all } x \in \mathscr{X}.\end{equation}

It is not hard to observe that optimization problem $\mathscr{P}$ and $\mathscr{B}$ have the same optimal solution(s). Objective function $\mathbb{P}$ can be recognized as objective function $\mathbb{B}$, divided by that same function plus one. This fraction is non-decreasing in its argument, and so if $\mathscr{B}$ is minimized, \textbf{$\mathscr{P}$} is as well (and vice versa). We now formalize this.

\begin{lemma} \label{lemma:similarproblem} Optimization problem $\mathscr{P}$ and $\mathscr{B}$ have the same optimal solution(s). \end{lemma}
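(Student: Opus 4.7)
The plan is to leverage the rewriting of $\mathbb{P}$ established in Lemma \ref{lemma:rewritingprob}, which expresses the overall pickpocket probability as $\mathbb{P}(x) = \mathbb{B}(x)/(1+\mathbb{B}(x))$ for every $x \in \mathscr{X}$. With this identity in hand, the lemma reduces to showing that composing $\mathbb{B}$ with the scalar map $f(y) = y/(1+y)$ preserves minimizers.

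First, I would introduce $f: \mathbb{R}_{++} \to \mathbb{R}$ defined by $f(y) = y/(1+y)$, and observe that $f$ is strictly increasing on $\mathbb{R}_{++}$. The cleanest way to see this is to differentiate: $f'(y) = 1/(1+y)^2 > 0$ for every $y > 0$. Alternatively, one can argue directly as in the proof of Lemma \ref{lemma:fullR}: for $0 < y_1 < y_2$, the inequality $y_1/(1+y_1) < y_2/(1+y_2)$ is equivalent to $y_1 - y_2 < 0$ after clearing the (positive) denominators, so strict monotonicity follows. Next, I would note that $\mathbb{B}(x) > 0$ for every $x \in \mathscr{X}$, since each term in the sum defining $\mathbb{B}$ is a strictly positive quotient (the numerator $\exp\{\alpha_i\}$ is positive and the denominators $x_{ij}^{\beta_j}$ and $x_j^{\beta_j}$ are positive because $x \in \mathbb{R}_{++}^{|N|\cdot|L|+|C|}$). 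Hence $f \circ \mathbb{B}$ is well-defined on $\mathscr{X}$ and, by Lemma \ref{lemma:rewritingprob}, coincides with $\mathbb{P}$.

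To conclude, I would argue by double implication on minimizers. If $x^\ast \in \mathscr{X}$ minimizes $\mathbb{B}$ over $\mathscr{X}$, then for every $x \in \mathscr{X}$ we have $\mathbb{B}(x^\ast) \leq \mathbb{B}(x)$; applying the strictly increasing (hence non-decreasing) $f$ to both sides yields $\mathbb{P}(x^\ast) = f(\mathbb{B}(x^\ast)) \leq f(\mathbb{B}(x)) = \mathbb{P}(x)$, so $x^\ast$ minimizes $\mathbb{P}$. Conversely, if $x^\ast$ minimizes $\mathbb{P}$, then any $x$ with $\mathbb{B}(x) < \mathbb{B}(x^\ast)$ would, by the \emph{strict} monotonicity of $f$, satisfy $\mathbb{P}(x) < \mathbb{P}(x^\ast)$, contradicting optimality; thus $x^\ast$ minimizes $\mathbb{B}$.

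There is no real obstacle here: the lemma is essentially a monotone-transformation argument, and the only thing to be slightly careful about is ensuring $\mathbb{B}$ takes values in the domain where $f$ is strictly increasing, which is immediate from the strict positivity of each summand in $\mathbb{B}(x)$ on the open feasible set $\mathscr{X}$. The lemma is useful precisely because $\mathbb{B}$ is algebraically much simpler than $\mathbb{P}$ (a sum of posynomial-like terms rather than a ratio), which will presumably make the subsequent convex reformulation tractable.
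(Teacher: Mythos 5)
Your proof is correct and rests on the same idea as the paper's: the objective of $\mathscr{P}$ is a strictly increasing transformation of the objective of $\mathscr{B}$, so the two problems share their minimizers. The paper packages this slightly differently (it passes to $\max_x \{1-\mathbb{P}(x)\} = \max_x 1/(1+\mathbb{B}(x))$ and argues via the positive denominator), whereas you compose directly with $y \mapsto y/(1+y)$; your version is, if anything, a bit cleaner in making the strict monotonicity explicit for the converse direction.
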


\begin{proof} First, observe that optimization problem $\mathscr{P}$ has the same optimal solution as optimization problem $\mathscr{P}_1$, which is defined by      

$$ \mathscr{P}_1 = \max_{x \in \mathscr{X}} \left\{1 - \mathbb{P}(x)\right\}.$$

This holds, since ($i$) minimizing a certain objective function is the same as maximizing that objective function times minus one and ($ii$) adding a constant to the objective function does not effect the optimal solution. The objective function of
$\mathscr{P}_1$ can be rewritten as  $$1 - \mathbb{P}(x) = \frac{1}{1 + \sum_{i \in N}   \frac{\mbox{exp}\left\{a_i\right\}}{\prod\limits_{j \in C} x_j^{\beta_j} \prod\limits_{j \in L}x_{ij}^{\beta_j}}} \mbox{ for all } x\in \mathscr{X}.$$ Since $1 + \sum_{i \in N}   \frac{\exp\left\{a_i\right\}}{\prod\limits_{j \in C} x_j^{\beta_j} \prod\limits_{j \in L}x_{ij}^{\beta_j}} >0$ for all $x \in \mathscr{X}$, the above fraction is maximized by minimizing its denominator. Moreover, since adding a constant (i.e., +1) does not affect the optimal solution, the above fraction is maximized by minimizing $ \sum_{i \in N}   \frac{\mbox{exp}\left\{a_i\right\}}{\prod\limits_{j \in C} x_j^{\beta_j} \prod\limits_{j \in L}x_{ij}^{\beta_j}} = \mathbb{B}(x)$. Consequently, optimization problem $\mathscr{P}$ and $\mathscr{B}$ must have the same optimal solution(s).  \end{proof}

We also illustrate the result of Lemma \ref{lemma:similarproblem} by means of an example.

\begin{example}
Reconsider the setting of Example 2. In Figure \ref{figure:convexfunction5}, we plot $\mathbb{B}(x)$ and $\mathbb{P}(x)$ for all $x_{1,3} \in (0,3)$. Please, observe in Figure \ref{figure:convexfunction5} that the values of $\mathbb{P}(x)$ and $\mathbb{B}(x)$ do not coincide, but their optimal solution does. In particular, for both problems we have $x_{1,3}^* = 1.5$.

\begin{figure}[h!]
\centering
\begin{tikzpicture}[domain=0:3, scale = 0.9]  \begin{axis}[xmin=0, xmax=3, ymin=0,ymax=3,     xlabel=$x_{1,3}$, legend style={at={(0.72,0.12)},anchor=west}] ]    \addlegendentry{$\mathbb{P}(x)$} \addplot[mark=none] {(1/x^4 + 1/(3-x)^4)/(1/x^4 + 1/(3-x)^4 + 1)};
\addlegendentry{$\mathbb{B}(x)$} \addplot[domain=0.7:2.3, mark=none, dashed] {(1/x^4 + 1/(3-x)^4)};
\end{axis}\end{tikzpicture}\caption{$\mathbb{P}(x)$ and $\mathbb{B}(x)$ for various values of $x_{1,3}$.} \label{figure:convexfunction5}
\end{figure}
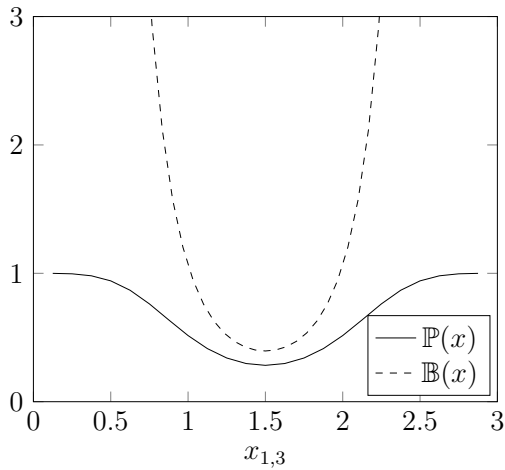
\end{example}

We now further investigate optimization problem $\mathscr{B}$ and its corresponding objective function. It turns out that $\mathbb{B}$ has a very desirable property, namely that it is convex. In order to show this, we first state some classical results about convexity for functions.

\begin{lemma} \label{lemma:sumconvex}
Let $X \subseteq \mathbb{R}^n$, $f,g: X \to \mathbb{R}$ be two convex functions and $\alpha_1,\alpha_2 \in \mathbb{R}_+$. Then, $\alpha_1 \cdot f+ \alpha_2 \cdot g$ is a convex function on $X$.
\end{lemma}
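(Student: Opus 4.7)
The plan is to apply the definition of convexity directly. A function $h: X \to \mathbb{R}$ (with $X$ a convex subset of $\mathbb{R}^n$) is convex if, for every $x,y \in X$ and every $\lambda \in [0,1]$, one has $h(\lambda x + (1-\lambda) y) \leq \lambda h(x) + (1-\lambda) h(y)$. So I would first fix arbitrary $x,y \in X$ and $\lambda \in [0,1]$, and then aim to establish this inequality for $h = \alpha_1 f + \alpha_2 g$.

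The first step is to invoke convexity of $f$ and $g$ separately, yielding
\[
f(\lambda x + (1-\lambda) y) \leq \lambda f(x) + (1-\lambda) f(y), \qquad g(\lambda x + (1-\lambda) y) \leq \lambda g(x) + (1-\lambda) g(y).
\]
The second step is to multiply the first inequality by $\alpha_1$ and the second by $\alpha_2$; crucially, $\alpha_1,\alpha_2 \geq 0$, so the direction of each inequality is preserved (this is the only place where the non-negativity assumption on the coefficients is actually used). Adding the two resulting inequalities and then rearranging gives
\[
\alpha_1 f(\lambda x + (1-\lambda) y) + \alpha_2 g(\lambda x + (1-\lambda) y) \leq \lambda \bigl(\alpha_1 f(x) + \alpha_2 g(x)\bigr) + (1-\lambda)\bigl(\alpha_1 f(y) + \alpha_2 g(y)\bigr),
\]
which is exactly the convexity inequality for $\alpha_1 f + \alpha_2 g$ at $(x,y,\lambda)$. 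Since $x, y, \lambda$ were arbitrary, this proves the lemma.

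There is no real obstacle here: the statement is a textbook fact whose proof is a one-line manipulation from the definition. The only subtlety worth mentioning is that the domain $X$ should be understood as convex so that $\lambda x + (1-\lambda) y \in X$ whenever $x,y \in X$ (otherwise "convex function on $X$" is not well-defined in the standard sense); this is implicit in the statement. The non-negativity of $\alpha_1$ and $\alpha_2$ is essential, since multiplying an inequality by a negative scalar would flip its direction and destroy the argument.
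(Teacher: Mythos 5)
Your proof is correct and complete: the direct verification of the convexity inequality for $\alpha_1 f + \alpha_2 g$, using non-negativity of the coefficients to preserve the inequality directions, is exactly the standard argument. The paper itself does not prove this lemma but simply cites \citet{bazaraa2013nonlinear}; your write-up supplies the textbook proof being referenced, and your remarks on where non-negativity is used and on the implicit convexity of $X$ are both apt.
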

\begin{proof}
See \citet{bazaraa2013nonlinear}, chapter 3, page 99.
\end{proof}
\begin{lemma} \label{lemma:compositeconvex}
Let $g: \mathbb{R} \to \mathbb{R}$ be a non-decreasing convex function, $X \subseteq \mathbb{R}^n$, and $h: X \to \mathbb{R}$ be a convex function. Then, the composite function $f: X \to \mathbb{R}$ defined as  $g \circ h$ is convex on $X$. \end{lemma}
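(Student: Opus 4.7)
The plan is to prove this directly from the definition of convexity by chaining together three inequalities: the convexity of $h$, the monotonicity of $g$, and the convexity of $g$, applied in that order. This is a textbook composition result, so there is no conceptual obstacle — the only care needed is to apply each hypothesis at the right step and to make sure the argument of $g$ stays in its domain (which is $\mathbb{R}$ here, so this is automatic).

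Concretely, I would fix arbitrary $x_1, x_2 \in X$ and $\lambda \in [0,1]$ and aim to show
$$f\bigl(\lambda x_1 + (1-\lambda) x_2\bigr) \;\leq\; \lambda f(x_1) + (1-\lambda) f(x_2).$$
First, I would unpack the definition $f = g \circ h$ and use the convexity of $h$ on $X$ to get $h(\lambda x_1 + (1-\lambda) x_2) \leq \lambda h(x_1) + (1-\lambda) h(x_2)$. Second, since $g$ is non-decreasing on $\mathbb{R}$, I can apply $g$ to both sides of this inequality and preserve the direction, obtaining $g\bigl(h(\lambda x_1 + (1-\lambda) x_2)\bigr) \leq g\bigl(\lambda h(x_1) + (1-\lambda) h(x_2)\bigr)$. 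Third, by the convexity of $g$, the right-hand side is bounded above by $\lambda g(h(x_1)) + (1-\lambda) g(h(x_2))$, which is exactly $\lambda f(x_1) + (1-\lambda) f(x_2)$. Combining the three inequalities yields the desired convexity condition for $f$.

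The main subtlety — hardly an obstacle — is that the monotonicity hypothesis on $g$ is essential precisely for the middle step: without it, knowing that the inner argument decreases (via convexity of $h$) would not let us conclude anything about how $g$ behaves on those arguments. I would note this explicitly after the proof, since it explains why the hypothesis list cannot be weakened. Since the result is standard, I would likely just cite it (e.g. Bazaraa et al., chapter 3) in the same style the paper used for Lemma on sums of convex functions, and keep the written proof as the short three-line chain above.
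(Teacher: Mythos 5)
Your three-step chain (convexity of $h$, then monotonicity of $g$, then convexity of $g$) is the standard and correct proof of this composition result. The paper itself gives no argument and simply cites \citet{bazaraa2013nonlinear}, so your written-out proof is exactly the textbook argument behind that citation, and your closing remark that one could just cite the reference matches what the paper actually does.
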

\begin{proof}
See \citet{bazaraa2013nonlinear}, chapter 3, page 99.
\end{proof}


\begin{lemma} \label{lemma:convexofB}
$\mathbb{B}$ is convex on $\mathscr{X}.$\end{lemma}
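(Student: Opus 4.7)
The plan is to decompose $\mathbb{B}$ into a positive linear combination of compositions of the form $\exp \circ h_i$, and then apply Lemmas \ref{lemma:sumconvex} and \ref{lemma:compositeconvex} to chain the convexity through each step.

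First, I would rewrite $\mathbb{B}$ in a more composition-friendly form. Using $a^{-b} = \exp(-b \ln a)$ for $a \in \mathbb{R}_{++}$, $b \in \mathbb{R}$, one has for every $x \in \mathscr{X}$
\begin{equation*}
\mathbb{B}(x) = \sum_{i \in N} \exp(\alpha_i) \cdot \exp\bigl(h_i(x)\bigr), \quad \text{where} \quad h_i(x) = -\sum_{j \in C}\beta_j \ln x_j - \sum_{j \in L}\beta_j \ln x_{ij}.
\end{equation*}
Since $\exp(\alpha_i)>0$ and, by Lemma \ref{lemma:sumconvex}, any non-negative linear combination of convex functions is convex, it suffices to show that each $\exp \circ h_i$ is convex on $\mathscr{X}$.

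Next, I would argue convexity of $h_i$. The single-variable function $t \mapsto -\ln t$ is convex on $\mathbb{R}_{++}$, and pre-composing it with the (linear) coordinate projection $x \mapsto x_j$ or $x \mapsto x_{ij}$ yields a convex function on $\mathscr{X} \subseteq \mathbb{R}_{++}^{|N|\cdot|L|+|C|}$ (this is the standard fact that composition of a convex function with an affine map is convex; alternatively one can check the definition of convexity directly on two points of $\mathscr{X}$). Since each $\beta_j \in \mathbb{R}_{++}$, Lemma \ref{lemma:sumconvex} yields that $h_i$, being a non-negative combination of such convex functions, is convex on $\mathscr{X}$.

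Finally, I would invoke Lemma \ref{lemma:compositeconvex} with the outer function $g = \exp : \mathbb{R} \to \mathbb{R}$, which is non-decreasing and convex, and the inner function $h_i : \mathscr{X} \to \mathbb{R}$, which is convex by the previous step. This gives convexity of $\exp \circ h_i$ on $\mathscr{X}$ for each $i \in N$. A final application of Lemma \ref{lemma:sumconvex} with the positive coefficients $\exp(\alpha_i)$ then delivers convexity of $\mathbb{B}$ on $\mathscr{X}$.

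No step is really the main obstacle; the only subtlety is the very first one, namely recognising that each summand is an exponential of a convex function so that the composition rule applies. Once that reformulation is in place, the result is a straightforward chaining of the two convexity lemmas stated just above. I would also briefly note that $\mathscr{X}$ itself is convex (intersection of the open positive orthant with a half-space), so that all convexity statements above are meaningful on this domain.
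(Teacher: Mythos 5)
Your proof is correct and follows essentially the same route as the paper: express each summand as an exponential of a convex (negative-log-linear) function, then chain Lemma \ref{lemma:compositeconvex} and Lemma \ref{lemma:sumconvex}. The only cosmetic difference is that you factor $\exp(\alpha_i)$ out as a positive coefficient while the paper keeps $\alpha_i$ inside the exponent as a constant convex term; both are fine.
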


\begin{proof}

Since $\beta_j > 0$ for all $j \in C \cup L$, we can recognize $ - \beta_j \ln x_j$ for all $j \in C$ and $ - \beta_j \ln x_{ij}$ for all $j \in L$ and all $i \in N$ as convex functions on $\mathscr{X}$. Moreover, constant $\alpha_i$ for all $i\in N$ is also convex on $\mathscr{X}$. By Lemma \ref{lemma:sumconvex}, we know that the sum of convex functions is still convex on the same domain, and so for every $i \in N$,

$$\alpha_i -\sum_{j \in C} \beta_j \ln x_j - \sum_{j \in L} \beta_j \ln x_{ij}$$

is convex on $\mathscr{X}$. \\

Next, observe that the exponential function is a non-decreasing convex function on $\mathbb{R}$. Consequently, by Lemma \ref{lemma:compositeconvex}, we know that the composite function 
$$ \mbox{exp} \left\{\alpha_i-\sum_{j \in C} \beta_j \ln x_j - \sum_{j \in L} \beta_j \ln x_{ij}\right\}$$ 

is convex on $\mathscr{X}$ for all $i \in N$. Consequently, by Lemma \ref{lemma:sumconvex}, we also know that

$$\sum_{i \in N} \mbox{exp} \left\{\alpha_i-\sum_{j \in C} \beta_j \ln x_j - \sum_{j \in L} \beta_j \ln x_{ij}\right\}$$ 

is convex on $\mathscr{X}.$ By Lemma \ref{lemma:rewritingprob} (see the second line of the proof), we have
$$\sum_{i \in N} \mbox{exp} \left\{\alpha_i-\sum_{j \in C} \beta_j \ln x_j - \sum_{j \in L} \beta_j \ln x_{ij}\right\} = \sum_{i \in N}   \frac{\mbox{exp}\left\{a_i\right\}}{\prod\limits_{j \in C} x_j^{\beta_j} \prod\limits_{j \in L}x_{ij}^{\beta_j}} = B(x) \mbox{ for all } x \in \mathscr{X}.$$ 

Consequently, $\mathbb{B}$ is convex on $\mathscr{X}$, which concludes the proof. \end{proof}

In addition to the convexity of $\mathbb{B}$, we can also say something about our set of potential optimal budget allocations. It is formulated as all $x \in \mathbb{R}_{++}^n$ for which

\begin{equation} \label{linearequationR} \sum_{i \in N} \sum_{j \in L} x_{i,j} + \sum_{j \in C} x_j = R, \end{equation} 

where the equality results from Lemma \ref{lemma:fullR} directly. Note that $\mathbb{R}_{++}^n$ is an open convex set and (\ref{linearequationR}) a linear equality. Consequently, optimization problem $\mathscr{B}$ can be recognized as a problem with a convex objective function (Lemma \ref{lemma:convexofB}), a linear constraint ($\sum_{i \in N} \sum_{j \in L} x_{i,j} + \sum_{j \in C} x_j = R$) and an open feasible set ($\mathbb{R}_{++}^n$). Such a problem is often called a convex problem (see, e.g., \citet{freund2016optimality}), for which we present an interesting result below.

\begin{theorem} \label{theorem:convexproblem}
Suppose that $\mathscr{G}$ is a convex problem on $\mathscr{Y} \subseteq \mathbb{R}^n$, i.e., an optimization problem 
\begin{equation*} \begin{aligned}
\mathscr{G} := &\min f(y)\\
\mbox{s.t. } \hspace{4mm} &h(y) = 0 \\
&y \in \mathscr{Y},
\end{aligned} 
\end{equation*}
with $f$ convex on $\mathscr{Y}$,
$h$ linear on $\mathscr{Y}$, and $\mathscr{Y}$ an open convex set. If  $\overline{y} \in \mathscr{Y}$ is a feasible solution of $\mathscr{G}$, and  $\overline{y}$ together with multiplier $\lambda \in \mathbb{R}$ satisfies
$$\nabla f(\overline{y}) + \lambda \nabla h(\overline{y}) = (0,0,,\ldots,0)^T,$$

then $\overline{y}$ is the global optimal solution of $\mathscr{G}$ (on $\mathscr{Y}$).
\end{theorem}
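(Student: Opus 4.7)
My plan is to apply the classical Lagrangian sufficiency argument adapted to a single equality constraint. I would define the Lagrangian $L : \mathscr{Y} \to \mathbb{R}$ by $L(y) = f(y) + \lambda h(y)$, where $\lambda$ is the multiplier supplied by the hypothesis. The stationarity assumption $\nabla f(\overline{y}) + \lambda \nabla h(\overline{y}) = (0,\ldots,0)^T$ says precisely that $\nabla L(\overline{y}) = (0,\ldots,0)^T$, so the heart of the proof reduces to showing that $\overline{y}$ is a global minimizer of $L$ on $\mathscr{Y}$. Once that is in place, for any feasible $y$ (which must satisfy $h(y) = 0 = h(\overline{y})$) I would conclude
\[
f(\overline{y}) = L(\overline{y}) \leq L(y) = f(y),
\]
and global optimality of $\overline{y}$ for $\mathscr{G}$ follows immediately.

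The first step is to verify convexity of $L$ on $\mathscr{Y}$. Because $h$ is linear, both $h$ and $-h$ are convex, so $\lambda h$ is convex irrespective of the sign of $\lambda$: for $\lambda \geq 0$ this is a direct application of Lemma \ref{lemma:sumconvex}, while for $\lambda < 0$ I would rewrite $\lambda h = |\lambda|(-h)$ and invoke the same lemma with the nonnegative scalar $|\lambda|$. Combining this with the hypothesized convexity of $f$, another application of Lemma \ref{lemma:sumconvex} yields convexity of $L$ on $\mathscr{Y}$.

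The second step, which I expect to be the main obstacle, is to convert the stationarity condition into a global-minimality statement. For this I would invoke the standard first-order characterization of a differentiable convex function on an open convex set, namely $L(y) \geq L(\overline{y}) + \nabla L(\overline{y})^T (y - \overline{y})$ for every $y \in \mathscr{Y}$; substituting $\nabla L(\overline{y}) = (0,\ldots,0)^T$ gives $L(y) \geq L(\overline{y})$ throughout $\mathscr{Y}$, which is exactly the global-minimality statement needed. The delicate point is that this characterization presupposes differentiability of $L$ on the open set $\mathscr{Y}$, which is built into the very use of gradients in the hypothesis but should be noted explicitly; I would cite \citet{bazaraa2013nonlinear} for the underlying first-order inequality. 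The remainder of the argument is bookkeeping, and it is worth emphasizing that no appeal to Lemma \ref{lemma:fullR} or to the specific structure of $\mathscr{P}$ is required here, since the theorem is stated at the level of generic convex problems.
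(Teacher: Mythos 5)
Your proposal is correct, but it is worth noting that the paper does not actually prove this theorem at all: its ``proof'' is a one-line citation to \citet{freund2016optimality}, page 20, treating the statement as a known result from convex optimization. What you have written is the standard self-contained Lagrangian-sufficiency argument that such a reference would contain, and it is sound in every step: the function $L(y)=f(y)+\lambda h(y)$ is convex on $\mathscr{Y}$ because $h$ is linear (hence $\lambda h$ is affine and convex for either sign of $\lambda$, exactly as you argue via Lemma \ref{lemma:sumconvex} applied to $h$ or $-h$); the first-order characterization $L(y)\geq L(\overline{y})+\nabla L(\overline{y})^{T}(y-\overline{y})$ on the open convex set $\mathscr{Y}$ combined with $\nabla L(\overline{y})=0$ gives global minimality of $\overline{y}$ for $L$; and restricting to feasible points, where $h$ vanishes at both $y$ and $\overline{y}$, collapses $L$ to $f$ and yields $f(\overline{y})\leq f(y)$. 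You are also right to flag that differentiability of $L$ is implicit in the hypothesis (the gradients appear in the statement) and that nothing about the specific structure of $\mathscr{P}$ or Lemma \ref{lemma:fullR} is needed. What your approach buys over the paper's is self-containedness; what the paper's citation buys is brevity, since the result is textbook material. Either is acceptable, and your argument would serve as a drop-in replacement for the citation.
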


\begin{proof}
See \citet{freund2016optimality}, page 20.
\end{proof}

Clearly, we can apply Theorem \ref{theorem:convexproblem} to our convex problem $\mathscr{B}$:  we can show the existence of a feasible solution $x^* \in \mathscr{X}$ that satisfies the criteria of Theorem \ref{theorem:convexproblem}, implying that $\mathscr{B}$ has a global optimal solution. This solution is the optimal budget allocation for $\mathscr{B}$ and $\mathscr{P}$.

\begin{theorem} \label{theorem:optimalallocation1}
The optimal budget allocation $x^*$ of $\mathscr{B}$, and also of $\mathscr{P}$, is given by
\begin{equation}
x_j^* = \frac{\beta_j}{\sum\limits_{i \in L \cup C} \beta_i} \cdot R  \emph{ for all } j \in C, 
\end{equation}
\begin{equation} \label{metbeta's}
    x_{ij}^* = \frac{\beta_j}{\sum\limits_{m \in L \cup C} \beta_m} \cdot R \cdot  \frac{\emph{exp}\left\{\frac{\alpha_i}{1 + \sum_{k \in L} \beta_k}\right\}}{ \sum_{m \in N} \emph{exp}\left\{\frac{\alpha_m}{1 + \sum_{k \in L} \beta_k}\right\}} \emph{ for all } i \in N \emph{ and all } j \in L. 
\end{equation}
\end{theorem}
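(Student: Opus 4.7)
The plan is to apply Theorem \ref{theorem:convexproblem} directly. The framework already fits: Lemma \ref{lemma:convexofB} gives convexity of $\mathbb{B}$, Lemma \ref{lemma:fullR} allows us to promote the budget inequality to the linear equality $h(x) = \sum_{i \in N}\sum_{j \in L} x_{ij} + \sum_{j \in C} x_j - R = 0$, and the underlying domain $\mathbb{R}_{++}^n$ is an open convex set. So it suffices to exhibit a multiplier $\lambda \in \mathbb{R}$ such that the proposed $x^*$ is feasible and satisfies $\nabla \mathbb{B}(x^*) = -\lambda \nabla h(x^*)$, i.e.\ every partial derivative of $\mathbb{B}$ at $x^*$ equals the same value $-\lambda$.

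Feasibility is the easy part. Writing $S = \sum_{k \in L \cup C} \beta_k$, the central terms contribute $(R/S)\sum_{j \in C}\beta_j$. For each fixed $j \in L$, the local terms $x_{ij}^*$ are $(\beta_j R/S)$ times a set of $i$-weights that by construction form a probability distribution over $N$, so they sum over $i$ to $\beta_j R/S$. Totalling over $C \cup L$ gives exactly $R$.

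For the first-order conditions, I would compute $\partial \mathbb{B}/\partial x_j = -(\beta_j/x_j)\,\mathbb{B}(x)$ for $j \in C$, and $\partial \mathbb{B}/\partial x_{ij} = -(\beta_j/x_{ij}) \cdot \exp(\alpha_i)/\bigl(\prod_{k \in C} x_k^{\beta_k} \prod_{k \in L} x_{ik}^{\beta_k}\bigr)$ for $i \in N$, $j \in L$, and then substitute $x^*$. Setting $\gamma = 1 + \sum_{k \in L} \beta_k$, so that $\sum_{k \in L} \beta_k = \gamma - 1$, the denominator $\prod_{k \in C}(x_k^*)^{\beta_k}\prod_{k \in L}(x_{ik}^*)^{\beta_k}$ factors as a constant $K$ (independent of $i$) times $(\exp(\alpha_i/\gamma)/T)^{\gamma-1}$, where $T = \sum_{m \in N}\exp(\alpha_m/\gamma)$ is the normalising denominator in the definition of $x_{ij}^*$. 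A short calculation then shows $\mathbb{B}(x^*) = T^\gamma/K$, and that both the central and local partial derivatives collapse at $x^*$ to the single constant $-ST^\gamma/(RK)$. Choosing $\lambda = ST^\gamma/(RK)$ activates Theorem \ref{theorem:convexproblem} and yields global optimality for $\mathscr{B}$; by Lemma \ref{lemma:similarproblem} the same $x^*$ is optimal for $\mathscr{P}$.

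I expect the main obstacle to be precisely that collapse. A priori the central partial derivative involves the \emph{entire} sum $\mathbb{B}(x^*)$, while the local partial derivative at $(i,j)$ involves only the $i$-th summand, so the fact that both cancel down to the same $i$- and $j$-independent constant is not immediate. It is the logit-style weight $\exp(\alpha_i/\gamma)/T$ appearing in $x_{ij}^*$, together with the identity $\sum_{k \in L}\beta_k = \gamma - 1$, that kills the $i$-dependence in the local derivative, and the proportionality in $\beta_j$ that kills the $j$-dependence in both — effectively, the very choice-theoretic form of $x^*$ in \eqref{metbeta's} is what is needed to equalise the gradient across all coordinates.
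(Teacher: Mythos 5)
Your proposal is correct and follows essentially the same route as the paper's proof: verify feasibility of $x^*$, use Lemma \ref{lemma:fullR} to reduce to the linear equality constraint, compute the partial derivatives of $\mathbb{B}$, show they all collapse to the common constant $-ST^{\gamma}/(RK)$ (which matches the paper's explicit $\lambda$), and invoke Theorem \ref{theorem:convexproblem} together with Lemma \ref{lemma:similarproblem}. Your parametrisation via $S$, $\gamma$, $T$ and $K$ merely streamlines the same algebra the paper carries out term by term.
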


\begin{proof}

Let 
\begin{equation*} \begin{aligned}
&x_j^* = \frac{\beta_j}{\sum\limits_{i \in L \cup C} \beta_i} \cdot R  \emph{ for all } j \in C, \\
    &x_{ij}^* = \frac{\beta_j}{\sum\limits_{m \in L \cup C} \beta_m} \cdot R \cdot  \frac{\exp\left\{\frac{\alpha_i}{1 + \sum_{k \in L} \beta_k}\right\}}{ \sum_{m \in N} \exp\left\{\frac{\alpha_m}{1 + \sum_{k \in L} \beta_k}\right\}} \emph{ for all } i \in N \emph{ and all } j \in L\\
    &\lambda = -\frac{\left( \sum_{i \in N} \exp \left\{  \frac{a_i}{1 + \sum_{k \in L} \beta_k}\right\}\right)^{1 + \sum_{k \in L} \beta_k}   \left( \sum_{k \in L \cup C} \beta_k\right)^{1 + \sum_{k \in L \cup C} \beta_k}}{R^{1 + \sum_{k \in L \cup C} \beta_k} \prod_{k \in L \cup C} \beta_k^{\beta_k}}.\end{aligned}
\end{equation*}

We now show that $x^*$ is feasible. Since $\beta_k >0$ for all $k \in L \cup C$ and $R > 0$, we have $x_j^* >0$ for all $j \in C$ and $x_{i,j}^* > 0$ for all $i \in N$ and all $j \in L$. Moreover,
$$ \sum_{j \in C} x_j^* + \sum_{i \in N} \sum_{j \in L} x_{i,j}^* = \frac{\sum_{j \in C} \beta_j}{\sum_{k \in C \cup L} \beta_k} \cdot R + \frac{\sum_{j \in L} \beta_j}{\sum_{k \in C \cup L} \beta_k} \cdot R = R.  $$
Hence, $x^* \in \mathscr{X}$, which implies that $x^*$ is feasible. It now remains to show that

$$\nabla \mathbb{P}(x^*) + \lambda \nabla \left(R - \sum_{j \in C} x_j^* - \sum_{i \in N} \sum_{j \in L} x_{ij}^*\right) = \left(\begin{matrix} 0 \\ \vdots \\ 0 \end{matrix} \right).$$

The gradient of $\mathbb{P}(x)$ for all $x \in \mathscr{X}$ is defined by
$$ \nabla \mathscr{P}(x) =  \left( \begin{matrix} \frac{d}{dx_{ij}}  \mathbb{P}(x) & \mbox{ for all } i \in N, j \in L \\
\frac{d}{x_j} \mathbb{P}(x) & \mbox{ for all } j \in C\end{matrix}\right) \mbox{ for all } x \in \mathscr{X}.$$

We now evaluate $x^*$ for the two types of partial derivatives. Let $i \in N$ and $j \in L$, then

\begin{equation*} \begin{aligned} \frac{d}{dx_{ij}}  \mathbb{P}(x^*) &= -\frac{\beta_j\exp\{a_i\}} {x_{i,j}^*\prod_{k \in C} {x_k^*}^{\beta_k} \prod_{k \in L} {x_{i,k}^*}^{\beta_k}} \\
&= -\frac{\left( \sum_{m \in N} \exp\left\{ \frac{a_m}{1 + \sum_{k \in L} \beta_k}\right\}\right)^{1 + \sum_{k \in L}\beta_k} \left(\sum_{k \in L \cup C} \beta_k\right)^{1 + \sum_{k \in L}\beta_k} } {R^{1 + \sum_{k \in L} \beta_k} \prod_{k \in C} {x_k^*}^{\beta_k}} \\
&= -\frac{\left( \sum_{m \in N} \exp \left\{  \frac{a_m}{1 + \sum_{k \in L} \beta_k}\right\}\right)^{1 + \sum_{k \in L} \beta_k}   \left( \sum_{k \in L \cup C} \beta_k\right)^{1 + \sum_{k \in L \cup C} \beta_k}}{R^{1 + \sum_{k \in L \cup C} \beta_k} \prod_{k \in L \cup C} \beta_k^{\beta_k}} \\
&= \lambda,\end{aligned} \end{equation*}

where we substituted $x_{ij}^*$ and $x_{i,k}^*$ for all $i \in N$ and all $k \in L \backslash \{j\}$ in the second equality and $x_k^*$ for all $k \in C$ in the third equality. Now, let $j \in C$. Then, 
\begin{equation*} \begin{aligned} \frac{d}{dx_{j}}  \mathbb{P}(x^*) &= \sum_{i \in N} \frac{ - \beta_j\exp\{a_i\}} {x_{j}^*\prod_{k \in C} {x_k^*}^{\beta_k} \prod_{k \in L} x_{i,k}^*} \\
& = -\frac{\sum_{k \in L \cup C} \beta_k}{R} \sum_{i \in N} \frac{\exp\{a_i\}} {\prod_{k \in C} {x_k^*}^{\beta_k} \prod_{k \in L} x_{i,k}^*} \\
& = -\frac{\left(\sum_{k \in L \cup C} \beta_k\right)^{1 + \sum_{k \in C} \beta_k}}{R^{1 + \sum_{k \in C} \beta_k} \prod_{k \in C} \beta_k^{\beta_k}} \sum_{i \in N} \frac{\exp\{a_i\}} {\prod_{k \in L} x_{i,k}^*} \\
& = -\frac{\left(\sum_{k \in L \cup C} \beta_k\right)^{1 + \sum_{k \in C \cup L} \beta_k}}{R^{1 + \sum_{k \in C \cup L} \beta_k} \prod_{k \in C \cup L} \beta_k^{\beta_k}}  \sum_{i \in N} \frac{\exp\{a_i\}} {\prod_{k \in L} \left(\frac{\exp\left\{\frac{\alpha_i}{1 + \sum_{m \in L} \beta_m}\right\}}{ \sum_{m \in N} \emph{exp}\left\{\frac{\alpha_m}{1 + \sum_{m \in L} \beta_m}\right\}}\right)^{\beta_k}} 
\end{aligned} \end{equation*}

\begin{equation*}\begin{aligned}
& = -\frac{\left(\sum_{k \in L \cup C} \beta_k\right)^{1 + \sum_{k \in C \cup L} \beta_k}}{R^{1 + \sum_{k \in C \cup L} \beta_k} \prod_{k \in C \cup L} \beta_k^{\beta_k}}  \sum_{i \in N} \frac{\exp\{a_i\}} { \frac{\exp\left\{\frac{\alpha_i (\sum_{m \in L} \beta_m)}{1 + \sum_{m \in L} \beta_m}\right\}}{ \left(\sum_{m \in N} \emph{exp}\left\{\frac{\alpha_m}{1 + \sum_{m \in L} \beta_m}\right\}\right)^{\sum_{m \in L} \beta_m}}} \\
&= -\frac{\left( \sum_{i \in N} \exp \left\{  \frac{a_i}{1 + \sum_{k \in L} \beta_k}\right\}\right)^{1 + \sum_{k \in L} \beta_k}   \left( \sum_{k \in L \cup C} \beta_k\right)^{1 + \sum_{k \in L \cup C} \beta_k}}{R^{1 + \sum_{k \in L \cup C} \beta_k} \prod_{k \in L \cup C} \beta_k^{\beta_k}} \\
&= \lambda,\end{aligned} \end{equation*}

where we substituted $x_j^*$ in the second equality, $x_k^*$ for all $k \in C \backslash \{j\}$ in the third equality, and $x_{i,k}^*$ for all $i \in N$ and $k \in L$ in the fourth equality. In the fifth equality we used $\exp(a) \cdot \exp(b) = \exp(a+b)$ for all $a,b \in \mathbb{R}$. In the sixth equality, we used that $\exp(a) / \exp(b) = \exp(a-b)$ for all $a,b \in \mathbb{R}$ and merged some expressions.

The gradient of $\left(R - \sum_{j \in C} x_j - \sum_{i \in N} \sum_{j \in L} x_{ij}\right)$ for all $x \in \mathscr{X}$ is defined by
\begin{equation*} \begin{aligned} &\nabla \left(R - \sum_{j \in C} x_j - \sum_{i \in N} \sum_{j \in L} x_{ij}\right) \\
= & \left( \begin{matrix} \frac{d}{dx_{ij}}  \left(R - \sum_{j \in C} x_j - \sum_{i \in N} \sum_{j \in L} x_{ij}\right)  & \mbox{ for all } i \in N, j \in L \\
\frac{d}{x_j} \left(R - \sum_{j \in C} x_j - \sum_{i \in N} \sum_{j \in L} x_{ij}\right)  & \mbox{ for all } j \in C\end{matrix}\right) \mbox{ for all } x \in \mathscr{X}. \end{aligned} \end{equation*}

We now evaluate $x^*$ for the two types of partial derivatives. Let $i \in N$ and $j \in L$, then
\begin{equation*} \begin{aligned} \frac{d}{dx_{i}}  \left(R - \sum_{k \in C} x_k^* - \sum_{k \in N} \sum_{m \in L} x_{km}^*\right) &= -1
\end{aligned} \end{equation*}
Now, let $j \in C$. Then, 
\begin{equation*} \begin{aligned} \frac{d}{dx_{j}}  \left(R - \sum_{k \in C} x_k^* - \sum_{k \in N} \sum_{m \in L} x_{km}^*\right) &= -1
\end{aligned} \end{equation*}

Consequently, for $x^*$ we obtain
$$\nabla \mathbb{P}(x^*) + \lambda \nabla \left(R - \sum_{j \in C} x_j^* - \sum_{i \in N} \sum_{j \in L} x_{ij}^*\right) = \left(\begin{matrix} \lambda \\ \vdots \\ \lambda \end{matrix} \right) + \lambda \left(\begin{matrix} -1 \\ \vdots \\ -1 \end{matrix} \right) = \left(\begin{matrix} 0 \\ \vdots \\ 0 \end{matrix} \right).$$

Hence, by Theorem \ref{theorem:convexproblem}, $x^*$ is an optimal budget allocation of $\mathscr{B}$. By Lemma \ref{lemma:similarproblem}, $x^*$ is also an optimal budget allocation of $\mathscr{P}$, which concludes this proof. \end{proof}

Theorem \ref{theorem:optimalallocation1} tells us that there is an intuitive way to optimally allocate the budget among the protective resources. The policy maker should first divide the full budget $R$ over the central and local protective resources, proportionally to their marginal effects, captured by the $(\beta_j)_{j \in C \cup L}$ parameters. Intuitively it means the budget is first allocated within the protective resources based on their protective impact, as perceived by the thief. This confirms the intuition that policy makers want to allocate more budget to protective resources for which thieves are more risk-sensitive. Interestingly, the budget for local protective resources is then further distributed among the different potential pickpocket locations according to a logit model where utilities are based on the parameters only. Intuitively this result seems to indicate that more budget is allocated towards locations that have a higher intrinsic attractiveness for pickpocketing, captured by the $(\alpha)_{i \in N}$, regardless of the protective resources.

We conclude this section with an example illustrating the result of Theorem \ref{theorem:optimalallocation1}.

\begin{example}
Reconsider the setting of Example \ref{example:example1}. Due to Theorem \ref{theorem:optimalallocation1}, the optimal budget allocation is $x_{3}^* = 5$, $x_{1,4}^* = 9$, $x_{2,4}^* = 6$, $x_{1,5}^* = 6$, and $x_{2,5}^* = 4$ with $\mathbb{P}(x^*)  = \frac{5}{545} < \frac{1}{13} =\mathbb{P}(x)$.
\end{example}

\section{Numerical experiments: the pickpocket plan in Paris}
\label{results}

With a record number of more than 40 million visitors registered in 2018, Paris remains one of the most visited city in the world (\citet{sortira}). With no surprise pickpocketing is thus a major problem in the French capital. This was also confirmed by a news bulletin in June 2022 (see \citet{telegraaf}), stating that the police of Paris had reactivated its ‘pickpocket plan’ requesting police to patrol touristic attractions such as the Champs-Élysées, Montmartre, the Eiffel Tower, the Louvre museum and the Seine quays. 

Although we do not have access to the details of the pickpocket plan, we can imagine that policy makers of Paris allocate police patrols with simple intuitive rules\footnote{ \citet{ludwig2017measuring} studied the resource allocation processes of the UK’s territorial police forces. In particular, they investigated 43 Policing areas in England and Wales together and found out that often simple resource allocation rules, such as proportional (to demand) or static/non-flexible, are applied. We can imagine that other national police forces apply similar types of resource allocation rules.}. For example, they could decide to assign the same number of patrols over the potential pickpocket locations, or decide to assign patrols proportionally to the number of tourists per potential pickpocket location. Next to police patrol allocation, decisions concerning other types of protective resources, either central or local, could have been taken similarly. Motivated by this, we further investigate the Paris example, as presented in Section \ref{newproblem}, to analyze the performance of these simple intuitive rules. In particular, we consider the following two rules

\begin{enumerate}
    \item \textbf{$\text{Central and Location-Equal (CLE)}$ rule} first allocates a fraction $\gamma \in (0,1)$ of the budget $R$ to the central protective resources and the remaining fraction ($1-\gamma$) of $R$ to the local protective resources. The central budget is then allocated equally over the number of central protective resources and the local budget equally over the number of local protective resources and potential pickpocket locations. Hence, each potential pickpocket location get allocated the same amount of local protective resources. Formally, for any $\gamma \in (0,1)$ the rule reads 

\begin{equation} \begin{aligned}
    x^{\text{CLE}}_j &= \gamma \cdot \frac{R}{\vert C \vert}, \text{ for all } j \in C \\
    x^{\text{CLE}}_{ij} &= (1-\gamma) \cdot \frac{R}{ \vert L \vert \times \vert N \vert}, \text{ for all } i \in N \text{ and all } j \in L.
    \end{aligned}
\end{equation}

    \item \textbf{$\text{Central-Equal and Location-Proportional (CELP) rule}$} coincides with the CLE-rule except that the local budget is now allocated proportionally to the initial attractiveness of the potential pickpocket locations (i.e., proportionally to $(\alpha_i)_{i \in N}$). Formally, for any $\gamma \in (0,1)$, the rule reads

\begin{equation} \begin{aligned}
    x^{\text{CELP}}_j &= \gamma \cdot \frac{R}{\vert C \vert}, \text{ for all } j \in C \\ \\
    x^{\text{CELP}}_{ij} &= (1-\gamma)  \cdot \frac{R}{\vert L \vert} \cdot \frac{\alpha_i}{\sum_{k \in L} \alpha_k}, \text{ for all } i \in N \text{ and all } j \in L.
    \end{aligned}
\end{equation}
\end{enumerate}

We create a set of numerical experiments around our Paris example to analyze how these two rules perform in terms of overall pickpocket probability. In Table \ref{allocsexample1} we present the first results: we depict the optimal budget allocation (represented in bold on the first row), and some $\text{CLE}$ and $\text{CELP}$ allocations for values $\gamma \in \{0.25, 0.50, 0.75\}$. \\

\begin{table}[h!]
    \centering
    \begin{tabular}{|c|ccccc|cc|c|}
            \hline
      Allocation  &    \multicolumn{5}{c}{Allocations} & 
        \multicolumn{2}{c}{Pickp. probability} & Overall pickp. \\
          rule  & $x_3$ & $x_{14}$ & $x_{24}$ & $x_{15}$  & $x_{25}$  &  $N = \{1\}$ &  $N = \{2\}$ &  probability \\
\cline{1-9}
         OPTIMAL  & \textbf{5.00} & \textbf{9.00} & \textbf{6.00} & \textbf{6.00} & \textbf{4.00} & \textbf{0.55\%} & \textbf{0.37\%}  &  \textbf{0.92\%} \\
\cdashline{1-9}
         $\text{CLE}(\gamma = 0.25)$   & 7.50 & 5.625  & 5.625 & 5.625  & 5.625 &  1.69\% & 0.15\%  & 1.84\%  \\
         $\text{CLE}(\gamma = 0.50)$   & 15.00 & 3.75 & 3.75 & 3.75 & 3.75 & 6.12\% & 0.54\%  &  6.65\% \\
         $\text{CLE}(\gamma =  0.75)$   & 22.50 & 1.875 & 1.875 & 1.875 & 1.875  & 55.46\% & 4.87\%  & 60.33\% \\
\cdashline{1-9}
     $\text{CELP}(\gamma =  0.25)$   & 7.50 & 6.898 &	4.352 &	6.898 &	4.352 & 0.62\% &	0.54\% &	1.16\% \\
         $\text{CELP}(\gamma = 0.50)$   & 15.00 & 4.599 &	2.901 &	4.599 &	2.901 & 2.26\% &	1.99\% &	4.25\%
    \\
         $\text{CELP}(\gamma = 0.75)$   & 22.50 & 2.299 &	1.451 &	2.299 &	1.451 & 25.90\% &	22.74\% &	48.64\% \\
        \hline
    \end{tabular}
    \caption{Allocations and pickpocket probabilities stemming from CLE, CELP and the optimal budget allocation.}
    \label{allocsexample1}
\end{table}

From Table \ref{allocsexample1}, we can make two observations. First of all, we see that the overall pickpocket probability is strictly lower under the CELP rule. This is something we also see in different numerical experiments not reported here (i.e., for different $\gamma$'s). Hence, it is important to keep the initial attractiveness into account when allocating the budget. Secondly, we learn that the overall pickpocket probability can differ significantly for different values of $\gamma$. For instance, allocating 25\% of the total budget to central resources leads to an absolute increase of 59.41\% for CLE and 48.72\% for CELP, with respect to the minimal overall pickpocket probability. At the same time, we see that allocating 25\% of the total budget to central resources leads to an increase in overall pickpocket probability of 0.24\% and 0.9\%, respectively. These numbers are very close to what is maximally achievable in terms of pickpocket probability reduction. In summary, in this example, it is possible to come close to the minimal overall pickpocket probability with both intuitive rules, and in particular with the CELP rule. However, for both intuitive rules, it remains a delicate task how to allocate the total budget over the central and local resources. If the budget is not allocated properly over the central and local resources (i.e., $\gamma$ is not selected properly), the performance of simple intuitive allocation rules can be poor. 


In Figure \ref{newnew}, we illustrate the performance of the best possible CLE and CELP allocations against the optimal budget allocation for $(a_1,a_2) \in \{(1,9), (1.5,8.5),...,(9,1)\}$. All other parameters are kept constant and coincide with those set in the Paris example. The best possible CLE and CELP allocations are calculated by selecting the CLE and CELP allocation for which the overall pickpocket probability is the lowest amongst  $\gamma \in \{0.01, 0.02,...,0.99\}$.

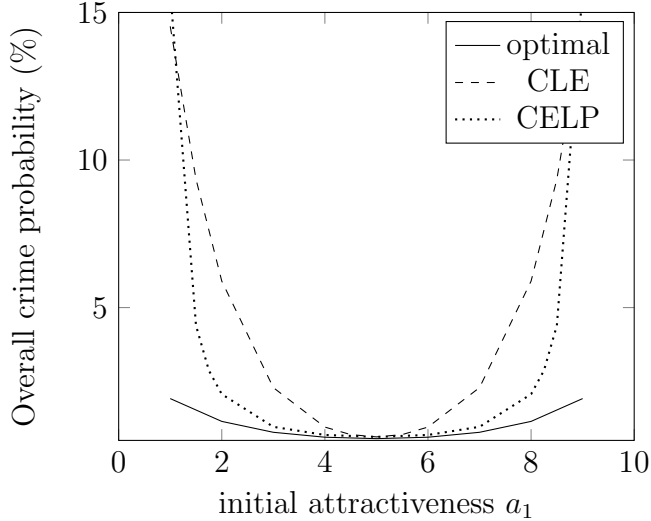
\begin{figure}[h!]
\centering
\begin{tikzpicture}[domain=1:9, scale = 1]  \begin{axis}[xmin=0, xmax=10, ymin=0.5,ymax=15,     xlabel= initial attractiveness $a_1$,, ylabel = Overall crime probability $(\%)$
]   
\addlegendentry{optimal} \addplot[black] coordinates {
    (1,1.917129824
) (2, 1.143937548
) (3, 0.774803214) (4, 0.607834909
) (5,0.559720147
) (6, 0.607834909) (7, 0.774803214) (8, 1.143937548) (9, 1.917129824)};
\addlegendentry{CLE} \addplot[dashed] coordinates {
    (1,14.52925656
) (1.5, 9.351659936
) (2, 5.897425276
) (3, 2.288337607
) (4, 0.9514133
) (4.5, 0.697042988
) (5,0.618638942
) (5.5, 0.697042988)
(6, 0.9514133
) (7, 2.288337607) (8, 5.897425276) (8.5, 9.351659936
)(9, 14.52925656)};
    \addlegendentry{CELP} \addplot[dotted, thick] coordinates {
    (1,15.76383679
) (1.5, 4.391975834) (1.75, 2.869354129)
(2, 2.065890931
) (3, 0.960005599
) (4, 0.684716592
) (5,0.618638942
) (6, 0.684716592) (7, 0.960005599) (8, 2.065890931) (8.25, 2.869354129) (8.5, 4.391975834) (9, 15.76383679)};
\end{axis}\end{tikzpicture}\caption{Overall pickpocket probability for optimal budget, CLE and CELP allocations with $a_1 \in [1,9]$.} \label{newnew}
\end{figure}

From Figure \ref{newnew}, we can make two observations. First of all, we see that both CLE and CELP perform quite well when the potential pickpocket locations are similar in terms of initial attractiveness. In particular, if both locations are equally attractive (i.e., $a_1 = a_2 =5$), both CLE and CELP show only a 0.05\% gap from the optimal overall pickpocket probability. However, if the attractiveness of the locations starts to differ, the performance of CLE and CELP decreases rapidly. For instance, if the second location is 9 times as attractive as the first location (i.e., $ a_1 =1, a_2=9)$, the total pickpocket probability of CLE and CELP is almost 14 times as high as under the optimal budget allocation. Secondly, we observe that CELP does no longer dominate CLE for all instances. For instance, if $a_1=1, a_2=9$, the CLE rule performs slightly better. However, in most cases, CELP rule is still preferable. Notably, the optimal $\gamma$ is the value closest to $\frac{\beta_3}{\beta_3 + \beta_4+\beta_5}$ for both CLE and CELP. This is in line with the central versus location resource division under the optimal budget allocation.

In summary, this experiment generalizes our previous observations that simple intuitive rules can perform well under certain conditions. In particular, this is the case if the initial attractiveness of pickpocket locations is roughly the same and the budget is allocated properly over the central and local protective resources. In other cases, however, when the initial attractiveness differs more, but also when the total budget is not allocated properly over the central and local protective resources, the performance of simple rules can be poor.  \\

Next to the study of simple and intuitive rules, we also investigate how the policy makers of the city of Paris should deal with their budget during special events or days, such as 'Le Quatorze Juillet' or a world cup football final. These days and events may be of particular interest for thieves, due to the extra number of tourists. Consequently, it is interesting for the city of Paris to identify how to (differently) allocate their budget during such days and events, and what is needed in terms of extra budget in order to keep the overall pickpocket probability stable. For the first question, we investigate what happens with the optimal allocation of the budget when the initial attractiveness of all locations increases. We do so by studying our Paris example for $(a_1,a_2) \in \{(6k\ln(3), 6k\ln(2)\}_{k=1,1.1,..,1.4}$. In Table \ref{allocsexample5}, we present the corresponding optimal budget allocations and overall pickpocket probabilities. 
\begin{table}[h!]
    \centering
    \begin{tabular}{c|ccccc|c}
            \hline
      Initial  &    \multicolumn{5}{c}{Allocations}  & Overall crime \\
          attractiveness  & $x_3$ & $x_{14}$ & $x_{24}$ & $x_{15}$  & $x_{25}$  &  probability \\
\cline{1-7}
         $(6.0 \ln(3), 6.0 \ln(2))$  & \textbf{5.000} & \textbf{9.000} & \textbf{6.000} & \textbf{6.000} & \textbf{4.000} &   \textbf{0.92\%} \\
\cdashline{1-7}
         $(6.6 \ln(3), 6.6 \ln(2))$   & 5.000 & 9.145  & 5.855 & 6.097  & 3.093  & 1.60\%  \\
         $(7.2 \ln(3), 7.2 \ln(2))$   & 5.000 & 9.289 & 5.711 & 6.193 & 3.807  & 2.78\% \\
         $(7.8 \ln(3), 7.8 \ln(2))$   & 5.000 & 9.432 & 5.568 & 6.288 & 3.712  & 4.81\% \\
   \hline
    \end{tabular}
    \caption{Allocations and crime probabilities stemming for various combinations of initial attractiveness.}
    \label{allocsexample5}
\end{table}

From Table \ref{allocsexample5}, we see that the total amount of resources allocated to central and local remains stable for various combinations of the initial attractiveness (i.e., remains 5 for central and 25 for local, each time). This also holds true for the total amount of resources allocated per location (i.e., $x_{14} + x_{24} = 15$ and $x_{15} + x_{25} = 10$). Basically, this tells us that the division of the total budget amongst central and local resources, as well as the amount allocated per location, is independent of the initial attractiveness of the locations. This is not a coincidence. It results from the proportional structure of the optimal allocation rule.

Secondly, we observe that the division of the budget per potential pickpocket location and resource type combination is not constant (e.g., $9.00 \not= 9.145$ for potential pickpocket location 1 and resource type 4).  In particular, it deviates such that ratio $x_{14}/x_{15} = x_{24}/x_{25} = \beta_4/\beta_5$ remains stable. In other words, the resources should be treated in the same proportional way amongst the locations. Since some of the locations are initially more attractive than others, more resources should be allocated to those with higher initial attractiveness. 

Finally, we observe that the overall pickpocket probability increases exponentially when the initial attractiveness is scaled linearly over all potential pickpocket locations. In Figure \ref{newnewnew}, we demonstrate what is needed in terms of extra resources to keep the same overall pickpocket probability (of 0.92\%). We observe an exponential behavior too. So, in order to control the exponential increase in overall pickpocket probability, also an exponential increase in the amount of resources is needed. We would like to stress that the results in this paragraph are also insightful when the number of pickpockets drops (e.g., in winter). \\

\begin{figure}[h!]
\centering
\begin{tikzpicture}[domain=1:9, scale = 1]  \begin{axis}[xmin=1, xmax=1.4, ymin=25,ymax=45,     xlabel= scaling factor $k$ for initial attractiveness, ylabel = amount of $R$ needed
]   
\addlegendentry{optimal} \addplot[black] coordinates {
    (1, 30) (1.1, 32.95) (1.2,36.2) (1.3, 39.8) (1.4, 43.75)};
\end{axis}\end{tikzpicture}\caption{Total number of resources needed to keep overall crime probability at 0.92\%.} \label{newnewnew}
\end{figure}
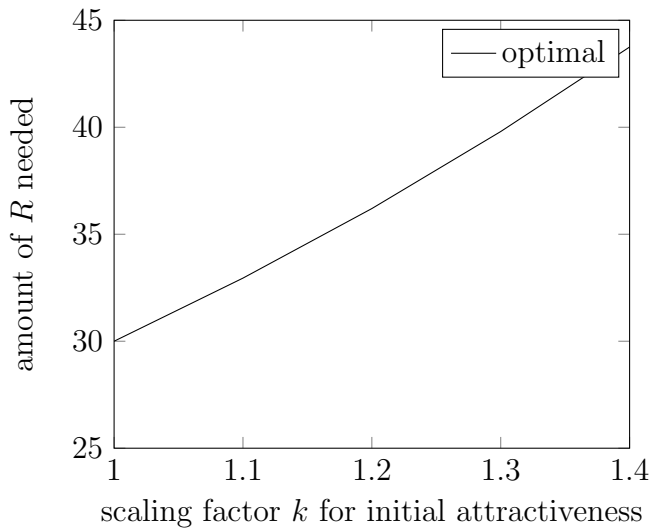

All in all, from our numerical experiments, we can formulate two main take-home messages. First, optimally allocating resources is important, especially for heterogeneous potential crime locations, which might be the case in real life situations. Second, doubling the attractiveness of locations, which may happen during special days or events, needs more than twice the number of resources needed to keep the total pickpocket probability constant.

\section{Conclusions} \label{conclu}

In this paper, we investigated how a decision maker should optimally allocate budget over protective resources and potential pickpocket locations, in order to minimize, per unit of time, the overall pickpocket probability. By using results from convex optimization, we were able to characterize the optimal budget allocation: the decision maker should first allocate the full budget over the central and local protective resources, proportionally to the sensitivity of the thief's utility towards the protective resources. Subsequently, the decision maker should allocate the budget per local protective resource over the locations based on the initial attractiveness of the locations. By means of numerical experiments, we showed that two simple intuitive rules only perform well for some specific settings. In all other cases, the performance of these simple intuitive rules is poor. Moreover, we illustrated that the extra amount of resources needed in special settings, in order to keep the total pickpocket probability stable, is growing exponential in the parameters of the situation.

In managerial terms, we thus recommend to only follow the proposed simple allocation rules if one believes that potential pickpocket locations are more or less similar in attractiveness. In all other cases, it is better to borrow insights from our optimal budget allocation rule while dividing the budget for preventing pickpocketing. We also want to emphasis the fact that to keep the probability of pickpocketing constant, significant extra protective resources might be needed during special events that increase the attractiveness of all pickpocket locations. This might be of particular importance in capacity-constrained settings.

Our study is based on several assumptions which could be relaxed in further studies. \textcolor{black}{In our current choice-based resource allocation model}, it is possible that under the optimal budget allocation some pickpocket locations face a high probability of pickpocketing, compared to others locations. In reality, such an outcome might be perceived as unfair. It would be interesting to see what happens with the total pickpocket probability whenever a fairness constraint is added. Such a fairness constraint could, for instance, reflect the idea that each location should have the same pickpocket probability.
Another direction for further research could be to investigate the stability of our results for a group of heterogeneous thieves, i.e., a group of thieves that have different tastes/interests for locations and protective resources. This will increase the realism of the setting, but also complicate a theoretical analysis of our model. In particular, we will most likely break the unimodal property of the objective function, a property that is crucial to find a closed-form expression for the minimal overall pickpocket probability. Most likely, this also applies to settings in which we do not minimize the overall pickpocket probability, but the expected impact of pickpocketing, which could be modelled as the product of the chance of pickpocket per location times its impact.

\section*{Acknowledgement}
This one is for Sien Schlicher, who hopefully can live in a safer world. Moreover, this work was partially supported by NWO grant VI.Veni.201E.017. Finally, we would like to thank the photographers, who allowed us to make use of their pictures in Figure 1.

\bibliography{references}

\end{document}